\newtheorem{thm}{Theorem}[section]
\newtheorem{lem}[thm]{Lemma}
\newtheorem{cor}[thm]{Corollary}
\newcommand{\lf}{\left\lfloor}
\newcommand{\rf}{\right\rfloor}
\newcommand{\addresseshere}{%
  \enddoc@text\let\enddoc@text\relax
}
\begin{document}

\title{A lower bound on the number of rough numbers}
\author{J.Z. Schroeder}
\address{Mosaic Center Radstock \\ Kej Bratstvo Edinstvo 45 \\ 1230 Gostivar \\ Macedonia}
\email{jzschroeder@gmail.com}
\keywords{Rough number, prime number}
\subjclass[2000]{Primary 11N25, Secondary 11A41}
\date{\today}

\begin{abstract}
Conceptually, a {\em rough number} is a positive integer with no small prime factors. Formally, for real numbers $x$ and $y$, let $\Phi(x,y)$ denote the number of positive integers at most $x$ with no prime factors less than $y$. In this paper we establish the lower bound $\Phi(n,p)\geq \lfloor 2n/p \rfloor +1$ when $p\geq 11$ is prime and $n\geq 2p$.
\end{abstract}

\maketitle


\section{Introduction}\label{sec:intro}

As defined in \cite{bib:Finch}, a {\em $y$-rough number} is an integer whose prime factors are all at least $y$. For real numbers $x$ and $y$, the function $\Phi(x,y)$ counts the number of $y$-rough numbers less than or equal to $x$. This function was studied by Buchstab, who showed in \cite{bib:Buchstab} that for any fixed $u>1$, 
\[\Phi(x,x^{1/u})\sim \omega(u)\frac{x}{\log x^{1/u}}, \hspace{12pt} x\rightarrow \infty,\] where $\omega(u)$ is the unique continuous function $\omega:[1,\infty]\rightarrow (0,\infty)$ satisfying \[\begin{array}{ll}
\omega(u)=\frac{1}{u}, & 1\leq u\leq 2, \\
\\
\frac{d}{du}(u\omega(u))=\omega(u-1), & u\geq 2. \\
\end{array}\]

The current study of $\Phi(x,y)$ is motivated by a graph labeling algorithm used in \cite{bib:mainpaper}. For a graph $G$ with $n$ vertices, a {\em prime labeling} is a bijection $f:V(G)\rightarrow \{1,2,...,n\}$ such that $\gcd(f(v),f(w))=1$ for any edge $vw$ in $G$. We seek a prime labeling of a bipartite graph $G=G[A,B]$ with $|A|=|B|=n/2$. We begin by placing the even multiples of 3 at most $n$ on vertices in $A$ and the odd multiples of 3 at most $n$ on vertices in $B$ such that no two vertices labeled with a multiple of 3 are adjacent. Starting with $p=5$ and continuing with all odd primes $p<n$, we place the unused even multiples of $p$ at most $n$ on unlabeled vertices in $A$ and the unused odd multiples of $p$ at most $n$ on unlabeled vertices in $B$ such that no two vertices labeled with a multiple of $p$ are adjacent. If this process can be completed, then $G$ has a prime labeling. At any step of the process, before assigning the unused multiples of a prime $p$, the number of unlabeled vertices in $B$ is given by $\Phi(n,p)$; the goal of this paper is to prove the following lower bound.

\begin{thm}\label{thm:mainthm}
Suppose $p\geq 11$ is prime. Then for all $n\geq 2p$, 
\[\Phi(n,p) \geq \lf\frac{2n}{p}\rf+1.\]
\end{thm}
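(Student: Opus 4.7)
I would split the argument based on the size of $n$ relative to $p$. In the \emph{prime regime} $2p \le n < p^2$, every $p$-rough integer in $[1,n]$ is either $1$ or a prime in $[p,n]$ (a composite rough number would have to be a product of two primes $\geq p$, hence at least $p^2$), so $\Phi(n,p) = 1 + \pi(n) - \pi(p-1)$ and the theorem reduces to the prime-counting inequality
\[
\pi(n) - \pi(p-1) \;\geq\; \lfloor 2n/p\rfloor.
\]
This I would prove using explicit Chebyshev-type bounds (e.g., Rosser--Schoenfeld) for the density of primes in $[p,n]$, together with direct verification at the smallest primes; the tightest case is $n=2p$, where the bound reduces to the fact that $[p,2p]$ contains at least $4$ primes for every $p\geq 11$.

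For the \emph{sieve regime} where $n$ is large, I would apply Legendre's inclusion--exclusion identity
\[
\Phi(n,p) = \sum_{d \mid P(p)} \mu(d)\lfloor n/d\rfloor, \qquad P(p) \;=\; \prod_{\substack{q<p \\ q\text{ prime}}} q,
\]
and write $\lfloor n/d \rfloor = n/d - \{n/d\}$ to isolate the main term $n\prod_{q<p}(1-1/q)$ from a bounded error $E(n,p)$ with $|E(n,p)| < 2^{\pi(p-1)}$. The crucial numerical ingredient is that $\prod_{q<p}(1-1/q) > 2/p$ for every $p\geq 11$. The case $p=11$ is immediate, since $\prod_{q<11}(1-1/q) = 8/35 > 2/11$, and one shows $p\prod_{q<p}(1-1/q)$ is non-decreasing in the prime variable by observing that for consecutive primes $p<p'$,
\[
\frac{p'\prod_{q<p'}(1-1/q)}{p\prod_{q<p}(1-1/q)} \;=\; \frac{p'(p-1)}{p^{2}} \;\geq\; 1,
\]
using $p' \geq p+2$. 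Hence the slack $n(\prod_{q<p}(1-1/q) - 2/p)$ grows linearly in $n$, and for $n$ sufficiently large it dominates both $|E(n,p)|$ and the difference $\lfloor 2n/p \rfloor - 2n/p$.

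The principal obstacle is matching the two regimes uniformly in $p$. A comparison of thresholds shows that the prime argument comfortably extends past $n \sim e^{p/2}$, while the naive sieve starts to win only when $n$ exceeds $2p \cdot 2^{\pi(p-1)}/(\text{slack})$; one checks that for $p\geq 11$ these two ranges do overlap, so together they cover every $n\geq 2p$. Making this quantitative for all primes $p\geq 11$ is the bulk of the work, and if a case proves tight one can bridge it by adjoining to the prime count an explicit enumeration of semiprime $p$-rough integers $q_1 q_2 \leq n$ with $p \leq q_1 \leq q_2$, whose smaller factor is confined to $[p,\sqrt{n}]$ and can therefore be counted using the same prime bounds. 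A secondary option, if the explicit bookkeeping becomes unwieldy, is to replace the $2^{\pi(p-1)}$ factor by a polynomial-in-$p$ error via a truncated Brun-type sieve, which would widen the sieve regime enough to meet the prime regime without auxiliary enumeration.
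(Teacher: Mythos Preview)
Your two-regime strategy---prime counting via Rosser--Schoenfeld for $n$ small, and Legendre's formula with the crude error $2^{\pi(p-1)}$ for $n$ large---is precisely the paper's approach; the paper makes the crossover explicit at $n=p^{(p-2c)/(2\ln p)}$ with $c=1.25506$ and subdivides the lower range into three further cases, but the skeleton is the same.

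The one substantive difference is how the key inequality $\prod_{q<p}(1-1/q)>2/p$ is obtained. The paper invokes Dusart's effective Mertens bound for $p\ge 2999$ and supplements it with extensive tabulation for $19\le p\le 2971$, whereas your monotonicity argument for $p\prod_{q<p}(1-1/q)$ is more elementary and self-contained (and correct: for consecutive primes $p<p'$ with $p\ge 3$ one has $p'(p-1)\ge p^2$ because $p'\ge p+2>p+1+1/(p-1)$). The trade-off is that your argument yields only the slack $\prod_{q<p}(1-1/q)-2/p\ge 18/(35p)$ rather than the paper's $\sim(e^{\gamma}\ln p)^{-1}$, so your sieve threshold sits somewhat higher and correspondingly more direct verification is needed at small primes before the two regimes meet; but the overlap does hold, and nothing in the overall plan is missing.
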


The proof of this theorem breaks into four cases depending on the value of $n$; these cases are covered by Lemmas \ref{lem:phiboundcase1}-\ref{lem:phiboundcase3} and Lemma \ref{lem:phiboundcase4} in Section \ref{sec:proof}. For a real number $x$, let $\pi(x)$ denote the number of prime numbers less than or equal to $x$; we will need the following results on the distribution of prime numbers.

\begin{lem}[Corollary 1 in \cite{bib:Rosser}]\label{lem:pibound}
For $x>1$, \[\pi(x)<\frac{1.25506x}{\ln x},\] and for $x\geq 17$, \[\pi(x)>\frac{x}{\ln x}.\]
\end{lem}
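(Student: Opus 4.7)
The plan is to follow Rosser's analytic approach, combining the explicit formula for the Chebyshev function $\psi(x)$ with a quantitative zero-free region for the Riemann zeta function and numerical verification of $\pi(x)$ for small $x$.

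First, I would reduce both inequalities to an estimate on $\theta(x) = \sum_{p \leq x} \ln p$. Abel summation gives
\[
\pi(x) = \frac{\theta(x)}{\ln x} + \int_2^x \frac{\theta(t)}{t \ln^2 t}\, dt,
\]
so the claimed bounds on $\pi(x)$ follow from an asymptotic of the form $\theta(x) = x + O\bigl(x/\ln^N x\bigr)$ with sufficient uniformity. Since $\psi(x) - \theta(x) = \sum_{k \geq 2} \theta(x^{1/k}) = O(\sqrt{x}\,\ln x)$, it suffices to control $\psi(x) - x$, and this is provided by the explicit formula
\[
\psi(x) = x - \sum_{\rho} \frac{x^{\rho}}{\rho} - \ln(2\pi) - \tfrac{1}{2}\ln\bigl(1-x^{-2}\bigr),
\]
where $\rho$ ranges over the non-trivial zeros of $\zeta$. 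Bounding the sum over zeros relies on a classical zero-free region of the form $\sigma \geq 1 - c/\ln(|t|+2)$ together with numerical verification that $\zeta$ has no off-line zeros up to some height $T_0$, which can be combined to give an effective estimate for $|\psi(x) - x|$.

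Finally, for $x$ below a threshold $x_0$ the asymptotic analysis is not sharp enough, so one finishes by computing $\pi(x)$ directly for $2 < x \leq x_0$ and verifying both inequalities in that range by hand. The upper-bound constant is not arbitrary: $1.25506$ is essentially $\max_{x \geq 2} \pi(x)\ln(x)/x$, attained at $x = 113$, where $\pi(113)=30$ and $30\ln(113)/113 \approx 1.25506$, so the upper bound is literally tight in the small range and must be checked there. The main obstacle is the quantitative bookkeeping: threading the explicit formula through an effective zero-free region and matching the analytic regime cleanly to the numerical one, so that the constants $1.25506$ and $1$ emerge exactly with no uncovered gap between the two regimes. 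Carrying out this calibration is precisely the technical content of Rosser's 1941 paper, whence the citation in the statement.
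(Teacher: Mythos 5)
This lemma is not proved in the paper at all: it is quoted verbatim as Corollary~1 of the cited Rosser--Schoenfeld paper (Illinois J.\ Math.\ 6 (1962), not Rosser's 1941 paper, which contains only weaker constants --- the bound $1.25506$ and the threshold $x\geq 17$ are specific to the 1962 joint work). So there is no in-paper argument to compare yours against. Taken on its own terms, your proposal is an accurate description of how the cited source proceeds --- passing from $\pi$ to $\theta$ by Abel summation, from $\theta$ to $\psi$, invoking the explicit formula with an effective zero-free region plus verified zeros up to a height $T_0$, and finishing with a finite computation --- and you correctly identify the provenance of the constant, $1.25506\approx 30\ln(113)/113=\max_{x\geq 2}\pi(x)\ln(x)/x$, which is the right reason the upper bound cannot be improved without excluding small $x$. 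However, what you have written is a proof plan, not a proof: every quantitative ingredient (the constant $c$ in the zero-free region, the height $T_0$, the resulting bound on $|\psi(x)-x|$, the crossover point $x_0$, and the finite verification) is deferred, and it is precisely the interaction of these constants that makes the inequalities hold with $1.25506$ exactly and with the lower bound valid from $x=17$ onward --- note in particular that $\theta(x)<x$ throughout the relevant range, so the lower bound $\pi(x)>x/\ln x$ is carried by the integral term and by numerics, not by the asymptotic alone. For the purposes of this paper the correct move is the one the author makes: cite the result as a black box. If you intend your sketch as a proof, it has a genuine gap, namely all of the effective number theory.
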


\begin{lem}[Theorem, page 180 in \cite{bib:Nagura}]\label{lem:Bertrandtype}
Let $x\geq 25$. There is at least one prime number in the interval $\left(x,6x/5\right)$.
\end{lem}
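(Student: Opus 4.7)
The plan is to bound $\Phi(n,p)$ from below by explicitly producing $p$-rough numbers in $[1,n]$. The basic building blocks are: the integer $1$; primes in $[p,n]$; and, once $n \geq p^2$, products $qr$ of two primes $q,r \geq p$ with $qr \leq n$. This yields the estimate
\[
\Phi(n,p) \;\geq\; 1 + \bigl(\pi(n) - \pi(p-1)\bigr) + \sum_{p \leq q \leq \sqrt{n}} \bigl(\pi(n/q) - \pi(q-1)\bigr),
\]
where the sum is empty when $n < p^2$. I would split $n \geq 2p$ into four ranges and apply the appropriate bound in each.

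In the first range, $n$ lies just above $2p$ so that $\lfloor 2n/p \rfloor$ is a fixed small integer (starting at $4$). Beginning at $p$ and iterating Nagura's theorem (Lemma~\ref{lem:Bertrandtype}) I would produce primes $p = q_0 < q_1 < q_2 < \cdots$ with $q_{i+1} < 6q_i/5$; after four or five iterations I recover at least $\lfloor 2n/p \rfloor$ primes in $[p,n]$. For $p \in \{11,13,17,19,23\}$, where the hypothesis $x \geq 25$ of Lemma~\ref{lem:Bertrandtype} is violated on the first step, I would simply list primes in $[p,n]$ and check the bound directly.

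For the two middle ranges (still with $n < p^2$, so only $1$ and primes contribute to $\Phi$), I would appeal to Lemma~\ref{lem:pibound} to verify $\pi(n) - \pi(p-1) \geq \lfloor 2n/p \rfloor$. Splitting this into two subranges is natural because the Rosser bounds carry a factor $1$ on the lower side and $1.25506$ on the upper side, so the inequality $n/\ln n \geq 2n/p + 1.25506(p-1)/\ln(p-1)$ is tight for intermediate $n$ but slack for large $n$; in each subrange a different algebraic simplification will go through. For the final range, $n \geq p^2$, I would add the semiprime contributions. The arithmetic fact underlying this case is that the density $\prod_{q < p}(1 - 1/q)$ of $p$-rough integers strictly exceeds $2/p$ for every prime $p \geq 11$ (the inequality reverses at $p = 7$, which is exactly why the theorem assumes $p \geq 11$); turning this into an effective bound via Lemma~\ref{lem:pibound} applied to $\pi(n/q)$ should yield the desired linear gain.

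The hardest step to execute cleanly is the prime-only bound in the middle ranges, where Lemma~\ref{lem:pibound} is not tight: a handful of $(n,p)$ pairs near the subrange boundaries may have to be checked by hand. The other obstacle is bookkeeping, namely verifying that the four ranges tile $[2p,\infty)$ without gaps and that the chosen lower bound beats $\lfloor 2n/p \rfloor + 1$ at every boundary; I expect this to be tedious but routine.
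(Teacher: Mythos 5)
Your proposal does not address the statement you were asked to prove. The statement is Nagura's theorem itself: for every real $x\geq 25$ there is a prime in the open interval $\left(x,6x/5\right)$. What you have written is instead a strategy for the paper's main result, Theorem \ref{thm:mainthm}, the lower bound on $\Phi(n,p)$ --- and in the course of that strategy you explicitly \emph{invoke} Lemma \ref{lem:Bertrandtype} ("iterating Nagura's theorem \ldots I would produce primes $q_0<q_1<\cdots$ with $q_{i+1}<6q_i/5$"). As an argument for the lemma itself this is circular: you cannot use the statement as an ingredient in its own proof. Nothing in your sketch --- counting rough numbers, Legendre-type inclusion--exclusion, the Rosser--Schoenfeld bounds of Lemma \ref{lem:pibound} --- engages with the actual content of the lemma, which is a short-interval prime existence result.

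For the record, the paper does not prove this lemma either; it is imported verbatim from Nagura's 1952 paper (the bracketed attribution "[Theorem, page 180 in \cite{bib:Nagura}]" signals a quoted external result). An actual proof requires effective Chebyshev-type estimates: one bounds $\psi(x)=\sum_{p^k\leq x}\log p$ above and below by explicit linear functions of $x$ plus lower-order corrections (Nagura derives these from the prime factorization of ratios of factorials), deduces corresponding bounds on $\theta(x)=\sum_{p\leq x}\log p$, and shows $\theta(6x/5)-\theta(x)>0$ for all $x$ beyond an explicit threshold, finishing with a finite check down to $x=25$. Note also that Lemma \ref{lem:pibound} alone is too weak to yield this: the gap between the upper constant $1.25506$ and the lower constant $1$ swamps the factor $6/5$, so $\pi(6x/5)-\pi(x)>\frac{6x/5}{\ln(6x/5)}-\frac{1.25506\,x}{\ln x}$ is negative for all relevant $x$ and proves nothing. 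If you want to supply a proof of this lemma, you must work at the level of $\theta$ or $\psi$ with sharper constants than those available from Lemma \ref{lem:pibound}.
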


\section{Proof of Theorem \ref{thm:mainthm}}\label{sec:proof}

For any $n$ and $p$, set $z=2n/p$; then $\left\lfloor 2n/p \right\rfloor+1=\lfloor z\rfloor+1$ is a step function that only increments at integer values of $z$. Therefore, in what follows, we can restrict our attention to $\Phi(zp/2,p)$ for integer values of $z$. Namely, several of the proofs in this section require the checking of a finite number of small cases. The author has verified these by hand, and the details are provided in Appendix A.

\begin{lem}\label{lem:phiboundcase1}
If $p\geq 11$ is prime and $2p\leq n\leq 3p$, then $\Phi(n,p) \geq \lf 2n/p \rf+1$.
\end{lem}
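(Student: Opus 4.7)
The plan is to first observe that, under the hypotheses, every $p$-rough number at most $n$ is either $1$ or a prime in $[p,n]$. Indeed, a composite $p$-rough number has at least two prime factors, each $\geq p$, so is at least $p^2$; since $p\geq 11$ gives $p^2>3p\geq n$, no composite contributes. Hence $\Phi(n,p)=1+\pi(n)-\pi(p-1)$ and the task reduces to a lower bound on the number of primes in the interval $[p,n]$.

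Following the remark at the start of Section~\ref{sec:proof}, I restrict to integer values of $z=2n/p$, in which case $z\in\{4,5,6\}$. The three corresponding targets are: at least $4$ primes in $[p,2p]$, at least $5$ primes in $[p,5p/2]$, and at least $6$ primes in $[p,3p]$.

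For primes $p\geq 25$ the strategy is to iterate Lemma~\ref{lem:Bertrandtype}. Starting with $p$ itself and applying the lemma repeatedly produces primes $p<q_1<q_2<\cdots<q_k$ with $q_i<(6/5)^i p$ for every $i$, and the hypothesis $x\geq 25$ is preserved at every step since $q_{i-1}\geq p\geq 25$. The numerical facts $(6/5)^3<2$, $(6/5)^4<5/2$, and $(6/5)^5<3$ then show that taking $k=3,4,5$ places the extra primes $q_1,\ldots,q_k$ inside $(p,2p)$, $(p,5p/2)$, and $(p,3p)$ respectively; counting $p$ itself meets the required target in each of the three cases.

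The only real obstacle is the finite list of small primes $p\in\{11,13,17,19,23\}$ on which Nagura's bound cannot be launched. For each such $p$, I would simply enumerate the primes in $[p,2p]$, $[p,5p/2]$, and $[p,3p]$ and check the three targets directly; in every instance the count exceeds the requirement comfortably (for example, $[11,33]$ already contains the seven primes $11,13,17,19,23,29,31$, while the case $z=6$ demands only six). These routine verifications are of the type which, per the opening of Section~\ref{sec:proof}, are relegated to Appendix~A.
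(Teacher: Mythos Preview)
Your proof is correct and follows essentially the same approach as the paper: reduce to the three integer values $z\in\{4,5,6\}$, handle the small primes $p\in\{11,13,17,19,23\}$ by direct enumeration (the paper's Table~\ref{tab:dataphi1}), and for $p\ge 29$ iterate Nagura's bound (Lemma~\ref{lem:Bertrandtype}) to manufacture enough primes below $2p$, $5p/2$, and $3p$. The only cosmetic difference is that the paper applies Nagura to the fixed endpoints $(6/5)^{i-1}p$ to obtain disjoint intervals, whereas you apply it iteratively from each newly found prime $q_{i-1}$; both yield $q_i<(6/5)^ip$ and the same numerical thresholds $(6/5)^3<2$, $(6/5)^4<5/2$, $(6/5)^5<3$.
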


\begin{proof}
Let $p\geq 11$ be prime and $2p\leq n\leq 3p$; set $n=pz/2$, where $4\leq z\leq 6$ is a real number. Since we only need to consider integer values of $z$, it suffices to show that $\Phi(2p,p)\geq 5$, $\Phi(5p/2,p)\geq 6$, and $\Phi(3p,p)\geq 7$. 

Let $X_n^p$ denote the set of all positive integers at most $n$ with no prime factors less than $p$, so that $|X_n^p|=\Phi(n,p)$. If $11\leq p\leq 23$, then Table \ref{tab:dataphi1} on page \pageref{tab:dataphi1} provides the set $X_n^p$ and the value of $\Phi(zp/2,p)$ for $z=4,5,6$.

If $p\geq 29$, then the set of integers less than $2p$ that are not divisible by any prime $q<p$ includes $1$ and $p$; moreover, because $p>25$, Lemma \ref{lem:Bertrandtype} implies that there is at least one prime in each interval \[\left(p,\frac{6}{5}p\right),\left(\frac{6}{5}p,\frac{36}{25}p\right),\left(\frac{36}{25}p,\frac{216}{125}p\right).\] Thus, $\Phi(2p,p)\geq 5$. We apply Lemma \ref{lem:Bertrandtype} again to obtain at least one prime in $\left(2p,\frac{12}{5}p\right)\subset \left(2p,\frac{5}{2}p\right)$, so $\Phi(5p/2,p)\geq \Phi(2p,p)+1\geq 6$. Similarly from Lemma \ref{lem:Bertrandtype}, there is at least one prime in $\left(\frac{5}{2}p,3p\right)$, so $\Phi(3p,p)\geq \Phi(5p/2,p)+1\geq 7$, and the proof is complete.
\end{proof}

\begin{lem}\label{lem:phiboundcase2}
If $p\geq 11$ is prime and $3p<n\leq p^2$, then $\Phi(n,p) \geq \lf 2n/p\rf+1$.
\end{lem}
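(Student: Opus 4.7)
My plan is to exploit the hypothesis $n \leq p^2$ to convert the problem into a pure prime-counting inequality. Since $p$ is prime, every composite integer with no prime factor less than $p$ has size at least $p^2 > n$, so the $p$-rough integers in $[1,n]$ are exactly $1$ together with the primes in $[p,n]$, giving the exact identity
\[
\Phi(n,p) \;=\; \pi(n) - \pi(p) + 2.
\]
Combined with the reduction to integer $z = 2n/p$ made at the opening of this section, the desired bound becomes
\[
\pi(zp/2) \;\geq\; \pi(p) + z - 1 \qquad \text{for each integer $z$ with $7 \leq z \leq 2p$.}
\]

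I would then apply Lemma~\ref{lem:pibound}: since $zp/2 \geq 7p/2 > 17$, it gives $\pi(zp/2) > \tfrac{zp/2}{\ln(zp/2)}$ and $\pi(p) < \tfrac{1.25506\,p}{\ln p}$, so it suffices to prove the purely analytic inequality
\[
\frac{zp}{2\ln(zp/2)} - \frac{1.25506\,p}{\ln p} \;\geq\; z - 1.
\]
The strategy is a monotonicity argument in $z$: differentiating the left-hand side shows that for every prime $p$ above an explicit threshold $p_0$, the quantity (left-hand side) $- z$ is non-decreasing in $z$ on $[7,2p]$. This reduces the inequality to the single endpoint $z = 7$, where it becomes a statement in $p$ alone that can be verified for all primes $p \geq p_0$ by a direct estimate. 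The remaining finitely many small primes (a quick numerical check suggests $p \in \{11,13\}$, possibly also $p = 17$) are handled by tabulating $\Phi(zp/2,p)$ directly for the integer values $z \in [7,2p]$, placed in Appendix~A as in the proof of Lemma~\ref{lem:phiboundcase1}.

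The hardest step, I expect, will be the calibration of $p_0$. Because Lemma~\ref{lem:pibound} is rather loose for small arguments, $p_0$ must be chosen large enough for \emph{both} the monotonicity claim and the $z=7$ analytic estimate to succeed uniformly, yet small enough to keep the Appendix~A tables short. In particular, the monotonicity step requires $p$ to dominate $\ln(zp/2)$, which in the extreme case $z = 2p$ becomes $p \gtrsim 4\ln p$; jointly enforcing this constraint with the $z=7$ bound is the main technical care needed.
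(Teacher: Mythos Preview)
Your approach is essentially that of the paper: on $(3p,p^2]$ the $p$-rough integers are $1$ together with the primes in $[p,n]$ (plus $p^2$ itself at the right endpoint --- your stated \emph{equality} fails there, though only the lower bound $\Phi(n,p)\geq\pi(n)-\pi(p)+2$ is needed), and one then feeds in the Rosser--Schoenfeld bounds of Lemma~\ref{lem:pibound}. The only substantive difference is how the resulting two-variable inequality is handled. The paper crudely replaces $\ln(zp/2)$ by $\ln p^2=2\ln p$, reducing to $p/\ln p\geq(4z-4)/(z-5.02024)$, which is decreasing in $z$ and gives the threshold $p_0=97$; the twenty-one primes $11\leq p\leq 89$ are then covered by Tables~\ref{tab:dataphi2a}--\ref{tab:dataphi2b} (tabulating $\Phi(zp/2,p)$ only until it exceeds $2p+1$, after which monotonicity of $\Phi$ finishes the range). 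Your monotonicity-in-$z$ argument is sharper and does push $p_0$ down to about $17$, trading a slightly longer calculus verification for a much shorter table --- a reasonable alternative execution of the same idea.
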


\begin{proof}
Assume first that $11\leq p\leq 89$ and set $n=pz/2$, where $6<z\leq 2p$ is a real number. Note that $\Phi(n,p)$ is a nondecreasing function in $n$, so if $\Phi(n,p)\geq 2p+1$, then $\Phi(n_0,p)\geq 2p+1$ for all $n_0\geq n$. Therefore, for each $p$, we simply need to compute $\Phi(pz/2,p)$ starting with $z=7$ and continuing until we obtain a value at least $2p+1$. The necessary values of $\Phi(pz/2,p)$ for $7\leq z\leq 18$ are given in Table \ref{tab:dataphi2a} on page \pageref{tab:dataphi2a}, and the necessary values of $\Phi(pz/2,p)$ for $19\leq z\leq 28$ are given in Table \ref{tab:dataphi2b} on page \pageref{tab:dataphi2b}. No further computations are required for $11\leq p\leq 89$.

Assume now that $p\geq 97$ and again set $n=pz/2$, where $6<z\leq 2p$ is a real number. We first show that \[\frac{p}{\ln p}\geq \frac{4z-4}{z-5.02024}\] for all $z>6$. When $z=6$, $(4z-4)/(z-5.02024)=20/0.97976\leq 20.42$, and for $p\geq 97$, $p/\ln p\geq 20.42$. The derivative of $f(z)=(4z-4)/(z-5.02024)$ is \[f'(z)=\frac{-16.08096}{(z-5.02024)^2}<0.\] Thus, the right hand side decreases as $z$ increases, so \[\frac{p}{\ln p}\geq \frac{4z-4}{z-5.02024}\] for all $z>6$ as well.

From this we derive that 
\begin{equation}\label{eqn:ineq}
\frac{p}{4\ln p}(z-5.02024)\geq z-1.
\end{equation} Since $\Phi(n,p)$ counts 1, $p$, and all primes greater than $p$ and less than or equal to $n$, we have (using Lemma \ref{lem:pibound}) \[\Phi(n,p)\geq \pi(pz/2)-\pi(p)+2\geq \frac{\frac{pz}{2}}{\ln(\frac{pz}{2})}-\frac{1.25506p}{\ln p}+2.\] Since $z\leq 2p$ implies $pz/2\leq p^2$, we have \[\frac{\frac{pz}{2}}{\ln(\frac{pz}{2})}-\frac{1.25506p}{\ln p}+2\geq \frac{\frac{pz}{2}}{\ln p^2}-\frac{1.25506p}{\ln p}+2=\frac{pz-5.02024p}{4\ln p}+2.\] Finally, using  (\ref{eqn:ineq}), we have \[\frac{pz-5.02024p}{4\ln p}+2\geq z-1+2=\frac{2n}{p}+1\geq \lf\frac{2n}{p}\rf+1\] as desired.
\end{proof}

\begin{lem}\label{lem:phiboundcase3}
If $p\geq 11$ is prime and $p^2<n\leq p^d$, where $d=(p-2c)/2\ln p$ and $c=1.25506$, then $\Phi(n,p) \geq \lf 2n/p\rf+1$.
\end{lem}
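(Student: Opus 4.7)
The plan is to mirror the strategy of Lemma~\ref{lem:phiboundcase2}: count $1$, $p$, and the primes in $(p,n]$ to obtain $\Phi(n,p)\geq \pi(n)-\pi(p)+2$, then apply Lemma~\ref{lem:pibound}. I would first handle $p=11$ separately: here $d=(11-2c)/(2\ln 11)\approx 1.77<2$, so the interval $(p^2,p^d]$ is empty and the lemma holds vacuously. For $p\geq 13$, the quantity $p-4\ln p-2c$ is increasing in $p$ for $p\geq 4$ and evaluates to approximately $0.23$ at $p=13$, so $d>2$ throughout this regime.

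Assume $p\geq 13$ and $p^2<n\leq p^d$. Since $n>169$, Lemma~\ref{lem:pibound} yields
\[\Phi(n,p)>\frac{n}{\ln n}-\frac{cp}{\ln p}+2,\]
so it suffices to show
\[g(n):=\frac{n}{\ln n}-\frac{2n}{p}\geq \frac{cp}{\ln p}-1.\]
I would establish this by showing $g$ is nondecreasing on $[p^2,p^d]$ and evaluating at $n=p^2$. Writing $t=\ln n$, direct differentiation gives $g'(n)\geq 0$ iff $2t^2-pt+p\leq 0$, i.e., iff $t$ lies between the roots $(p\pm\sqrt{p^2-8p})/4$ of this quadratic. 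A short calculation shows the smaller root is below $2$ for every $p>8$, and the larger root exceeds $(p-2c)/2$ iff $p\geq 2c^2/(c-1)\approx 12.35$; both conditions hold for $p\geq 13$.

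With monotonicity established, $g(n)\geq g(p^2)=p^2/(2\ln p)-2p$. To close the argument I would verify $p^2/(2\ln p)-2p\geq cp/\ln p-1$; clearing the denominator $2\ln p$ and rearranging this becomes
\[p(p-4\ln p-2c)+2\ln p\geq 0,\]
which is immediate from $p-4\ln p-2c>0$ for $p\geq 13$. Chaining these inequalities gives $\Phi(n,p)>2n/p+1$, so since $\Phi(n,p)$ is an integer, $\Phi(n,p)\geq \lfloor 2n/p\rfloor+1$ as required.

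The main obstacle will be the monotonicity step, specifically verifying that $d\ln p=(p-2c)/2$ stays below the larger root of $2t^2-pt+p$. The resulting threshold $p\geq 2c^2/(c-1)$ is rather sharp, but it is cleared by the smallest prime also satisfying the other necessary condition $d>2$, namely $p=13$, so the case analysis lines up cleanly with the vacuous regime covered by $p=11$.
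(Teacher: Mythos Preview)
Your proof is correct and follows essentially the same strategy as the paper: dispose of $p=11$ vacuously, and for $p\geq 13$ bound $\Phi(n,p)\geq\pi(n)-\pi(p)+2$ via Lemma~\ref{lem:pibound}, then reduce to an analytic inequality verified by a monotonicity argument. The only difference is in the bookkeeping of that last step---the paper parametrizes $n=p^{\alpha}$ and notes that $\alpha/p^{\alpha-2}$ is decreasing (a one-line derivative), whereas you keep $n$ as the variable and show $g(n)=n/\ln n-2n/p$ is increasing by locating the roots of $2t^{2}-pt+p$; the paper's manipulation is a bit more economical, but the arguments are equivalent.
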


\begin{proof}
If $p=11$ then $d<2$, so the statement is vacuously true. Now let $p\geq 13$, and set $n=p^\alpha$, where $2<\alpha\leq d=(p-2c)/2\ln p$ and $c=1.25506$. We know \[\frac{\alpha}{p^{\alpha-2}}<2\] for all $\alpha>2$, because the terms are equal for $\alpha=2$ and \[\frac{d}{d\alpha}\left(\frac{\alpha}{p^{\alpha-2}}\right)=\frac{1-\alpha\ln p}{p^{\alpha-2}}<0.\] From this we obtain \[\frac{c\alpha}{p^{\alpha-2}}< 2c\Rightarrow p-\frac{c\alpha}{p^{\alpha-2}}> p-2c=2d\ln p\geq 2\alpha\ln p.\] Multiplying both sides of the inequality $p-c\alpha/p^{\alpha-2}\geq 2\alpha\ln p$ by $p^{\alpha-1}/\alpha\ln p$ yields \[\frac{p^\alpha-c\alpha p}{\alpha\ln p}\geq 2p^{\alpha-1}\geq \lf\frac{2p^\alpha}{p}\rf.\] Again $\Phi(n,p)$ counts 1, $p$, and all primes greater than $p$ and less than or equal to $n$, so we have (using Lemma \ref{lem:pibound}) \[\Phi(p^\alpha,p)\geq \pi(p^\alpha)-\pi(p)+2\geq \frac{p^\alpha}{\alpha\ln p}-\frac{cp}{\ln p}+2=\frac{p^\alpha-c\alpha p}{\alpha\ln p}+2\geq \lf\frac{2p^\alpha}{p}\rf+1.\] This completes the proof.
\end{proof}

The fourth and final case requires an extended argument. Let $p_1=2,p_2=3,p_3=5,...$ denote the sequence of primes, and for $k\geq 1$ set $Q_k=\prod_{i\leq k} p_i$. Moreover, let $\mu(n)$ be the M\"{o}bius function, defined for a positive integer $n$ by \[\mu(n)=\left\{\begin{array}{ll}
1 & n\text{ is square-free with an even number of prime factors;}\\
-1 & n\text{ is square-free with an odd number of prime factors;}\\ 
0 & n\text{ is not square-free.}\\
\end{array}\right.\] Note that if $d|Q_k$, then $\mu(d)=\pm 1$. We prove the following estimate, which is provided with a brief explanation in \cite[Equation 1.1]{bib:DeBruijn}.

\begin{lem}\label{lem:phiform}
Suppose $p$ is prime. Then \[\Phi(p^\alpha,p)\geq p^\alpha\prod_{p_i<p}\left(1-\frac{1}{p_i}\right)-2^{\pi(p)}.\]
\end{lem}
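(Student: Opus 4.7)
The plan is to use Möbius-based inclusion-exclusion (the classical sieve of Eratosthenes). Let $k=\pi(p)-1$, so that $Q_k=\prod_{p_i<p}p_i$ is the product of all primes strictly less than $p$. A positive integer $n$ has no prime factor less than $p$ precisely when $\gcd(n,Q_k)=1$, so from the standard identity $\sum_{d\mid m}\mu(d)=[m=1]$ applied to $m=\gcd(n,Q_k)$ and a swap of summation,
$$\Phi(p^\alpha,p)=\sum_{n\leq p^\alpha}\sum_{d\mid\gcd(n,Q_k)}\mu(d)=\sum_{d\mid Q_k}\mu(d)\lf p^\alpha/d\rf.$$

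Next I would separate the main term from the floor-function error. Writing $\lf p^\alpha/d\rf=p^\alpha/d-\{p^\alpha/d\}$ and using the Euler-product expansion $\sum_{d\mid Q_k}\mu(d)/d=\prod_{p_i<p}(1-1/p_i)$, which is valid because every divisor of the squarefree number $Q_k$ is itself squarefree, this gives
$$\Phi(p^\alpha,p)=p^\alpha\prod_{p_i<p}\left(1-\frac{1}{p_i}\right)-\sum_{d\mid Q_k}\mu(d)\{p^\alpha/d\}.$$

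To finish, I would bound the error term trivially. Each summand satisfies $|\mu(d)\{p^\alpha/d\}|<1$, so the total error in absolute value is at most the number of divisors of $Q_k$, which is $2^k=2^{\pi(p)-1}\leq 2^{\pi(p)}$. Therefore
$$\Phi(p^\alpha,p)\geq p^\alpha\prod_{p_i<p}\left(1-\frac{1}{p_i}\right)-2^{\pi(p)},$$
as required. There is no real obstacle here; this is a standard sieve estimate, exactly the form credited to de Bruijn in the reference. The only mildly wasteful step is replacing $2^{\pi(p)-1}$ by the looser $2^{\pi(p)}$, but that slack costs nothing and matches the cleaner form in the statement, and presumably simplifies the bookkeeping in the application in the next lemma.
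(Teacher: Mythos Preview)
Your proof is correct and follows essentially the same route as the paper: both start from Legendre's inclusion--exclusion formula $\Phi(p^\alpha,p)=\sum_{d\mid Q_k}\mu(d)\lf p^\alpha/d\rf$, separate out the main term $p^\alpha\sum_{d\mid Q_k}\mu(d)/d=p^\alpha\prod_{p_i<p}(1-1/p_i)$, and bound the remaining error by the number of divisors $2^k=2^{\pi(p)-1}<2^{\pi(p)}$. Your derivation is slightly more explicit (you justify Legendre's formula via the M\"obius identity and name the Euler product), but the argument is the same.
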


\begin{proof}
If $p=p_{k+1}$, then an exact formula for $\Phi(p^\alpha,p)$ (sometimes called Legendre's formula) is given by \[\Phi(p^\alpha,p)=\sum_{d|Q_k}\mu(d)\lf \frac{p^\alpha}{d}\rf.\] Set \[\phi(p^\alpha,p)=\sum_{d|Q_k}\mu(d)\frac{p^\alpha}{d};\] we now obtain the approximation \[\left|\Phi(p^\alpha,p)-\phi(p^\alpha,p)\right|=\left|\sum_{d|Q_k}\mu(d)\left(\lf\frac{p^\alpha}{d}\rf-\frac{p^\alpha}{d}\right)\right|\leq \sum_{d|Q_k} 1=2^{\pi(p_k)}<2^{\pi(p)}.\] From this we obtain \[\Phi(p^\alpha,p)\geq \phi(p^\alpha,p)-2^{\pi(p)}.\] Finally, note that \[\frac{\phi(p^\alpha,p)}{p^\alpha}=\sum_{d|Q_k}\mu(d)\frac{1}{d}=\prod_{p_i<p}\left(1-\frac{1}{p_i}\right).\] The result follows.
\end{proof}

\begin{lem}\label{lem:power2bound}
Suppose $p\geq 19$, and let $c=1.25506$. Then \[\frac{2^{\frac{cp}{\ln p}}+1}{p^{\frac{p-2c}{2\ln p}-1}}<\frac{11}{8}.\]
\end{lem}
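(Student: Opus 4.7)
The approach is to take natural logarithms and reduce the claim to a one-variable analytic inequality, then establish it by monotonicity plus a base case at $p=19$. Since $p^{(p-2c)/(2\ln p)} = e^{(p-2c)/2}$, the stated inequality is equivalent to
\[p\bigl(2^{cp/\ln p}+1\bigr) < \tfrac{11}{8}\, e^{(p-2c)/2}.\]
Applying $\ln(1+x) \leq x$ with $x = 2^{-cp/\ln p}$ gives $\ln\bigl(2^{cp/\ln p}+1\bigr) \leq cp\ln 2/\ln p + 2^{-cp/\ln p}$, so it suffices to prove
\[F(p) := \frac{cp\ln 2}{\ln p} + \ln p + c + 2^{-cp/\ln p} - \frac{p}{2} - \ln\tfrac{11}{8} < 0\]
for every real $p \geq 19$.

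Next I would show that $F$ is strictly decreasing on $[19,\infty)$. Differentiation yields
\[F'(p) = \frac{c\ln 2\,(\ln p - 1)}{(\ln p)^2} + \frac{1}{p} - \frac{1}{2} - \frac{c(\ln 2)^2(\ln p - 1)}{(\ln p)^2}\, 2^{-cp/\ln p},\]
whose last term is negative. The auxiliary function $(\ln p - 1)/(\ln p)^2 = 1/\ln p - 1/(\ln p)^2$ has derivative $(2 - \ln p)/\bigl(p(\ln p)^3\bigr)$, which is negative whenever $\ln p > 2$; since $\ln 19 > 2$, this function is decreasing on $[19,\infty)$ and attains its maximum there at $p = 19$. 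A single numerical check that $c\ln 2 \cdot (\ln 19 - 1)/(\ln 19)^2 + 1/19 < 1/2$ then gives $F'(p) < 0$ throughout $[19,\infty)$.

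It remains to verify the base case $F(19) < 0$. With $c = 1.25506$ and $\ln 19 \approx 2.94444$, the six terms of $F(19)$ are approximately $5.6137$, $2.9444$, $1.2551$, $0.0036$, $-9.5$, and $-0.3185$, summing to roughly $-0.002$. This base case is the main obstacle: the original inequality is extremely tight at $p = 19$, where the left-hand side equals about $1.371$, only $0.3\%$ below $11/8$. Hence $F(19)$ must be computed with sufficient numerical precision to guarantee its sign. Once this is settled, monotonicity of $F$ on $[19,\infty)$ yields $F(p) < 0$ for all $p \geq 19$ and completes the proof.
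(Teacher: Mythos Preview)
Your proof is correct and follows the same strategy as the paper: check the base case $p=19$ numerically, then establish monotonicity on $[19,\infty)$ via logarithmic differentiation. The only differences are cosmetic---the paper handles the ``$+1$'' in the numerator by splitting $f=f_1+f_2$ with $f_1(p)=2^{cp/\ln p}/p^{(p-2c)/(2\ln p)-1}$ and showing each summand decreases, whereas you absorb it via $\ln(1+x)\le x$; indeed your bound on the dominant terms of $F'$ is literally the paper's computation of $(\ln f_1)'$---and your derivative of $2^{-cp/\ln p}$ should carry a single factor of $\ln 2$ rather than $(\ln 2)^2$, though this is harmless since you only use its sign.
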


\begin{proof}
Set \[f(p)=\frac{2^{\frac{cp}{\ln p}}+1}{p^{\frac{p-2c}{2\ln p}-1}}.\] Then $f(19)\approx 1.373<11/8$; to establish the result, we will show that $f$ is decreasing. Let \[f_1(p)=\frac{2^{\frac{cp}{\ln p}}}{p^{\frac{p-2c}{2\ln p}-1}}\text{ and }f_2(p)=\frac{1}{p^{\frac{p-2c}{2\ln p}-1}},\] so that $f=f_1+f_2$. Consider first \[\ln f_1(p)=\frac{cp}{\ln p}\ln 2-\left(\frac{p-2c}{2\ln p}-1\right)\ln p=\frac{(c\ln 2)p}{\ln p}-\frac{p-2c}{2}+\ln p.\] Implicit differentiation yields \[f_1'(p)=f_1(p)\left[\frac{c\ln 2}{\ln p}-\frac{c\ln 2}{\ln^2 p}-\frac{1}{2}+\frac{1}{p}\right].\] Since $f_1(p)>0$ for all $p$ and the expression in brackets is easily shown to be negative for all $p\geq 19$, we conclude that $f_1'(p)<0$. Similarly, we consider \[\ln f_2(p)=-\frac{p}{2}+c\] and see that $f_2'(p)=-f_2(p)/2<0$ as well. Thus $f(p)$ is decreasing, and $f(p)<11/8$ for all $p\geq 19$ as desired.
\end{proof}

For Lemmas \ref{lem:eulerconstant}-\ref{lem:phiboundcase4}, let \[\gamma=\lim_{m\rightarrow \infty} \left(-\ln m+\sum_{k=1}^{m}\frac{1}{k}\right)\approx 0.577\] be the Euler-Mascheroni constant. Note that 2971 and 2999 are consecutive primes.

\begin{lem}\label{lem:eulerconstant}
For a fixed $p$ prime, set $d=(p-2c)/2\ln p$, where $c=1.25506$, and assume $\alpha\geq d$. If $19\leq p\leq 2971$, then \[\frac{p}{e^{\gamma}\ln p}\left(1-\frac{0.2}{\ln^2 p}\right)-0.0058p-\frac{2^{\frac{cp}{\ln p}}}{p^{\alpha-1}}\geq 2+\frac{1}{p^{\alpha-1}},\] and if $p\geq 2999$, then \[\frac{p}{e^{\gamma}\ln p}\left(1-\frac{0.2}{\ln^2 p}\right)-\frac{2^{\frac{cp}{\ln p}}}{p^{\alpha-1}}\geq 2+\frac{1}{p^{\alpha-1}}.\]
\end{lem}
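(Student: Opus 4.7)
The plan is to pull the $\alpha$-dependence out of both inequalities using Lemma \ref{lem:power2bound}, reducing each to a single-variable inequality in $p$ that can then be verified by monotonicity plus a single boundary evaluation. Since $\alpha\geq d$ and $p>1$, we have $p^{\alpha-1}\geq p^{d-1}$, so
\[\frac{2^{cp/\ln p}}{p^{\alpha-1}}+\frac{1}{p^{\alpha-1}}=\frac{2^{cp/\ln p}+1}{p^{\alpha-1}}\leq \frac{2^{cp/\ln p}+1}{p^{d-1}}<\frac{11}{8}\]
for every prime $p\geq 19$, by Lemma \ref{lem:power2bound}. As $2+11/8=27/8$, both inequalities of Lemma \ref{lem:eulerconstant} reduce to establishing
\[F(p):=\frac{p}{e^{\gamma}\ln p}\!\left(1-\frac{0.2}{\ln^{2}p}\right)-0.0058p\geq \frac{27}{8}\]
for primes $19\leq p\leq 2971$, and
\[g(p):=\frac{p}{e^{\gamma}\ln p}\!\left(1-\frac{0.2}{\ln^{2}p}\right)\geq \frac{27}{8}\]
for primes $p\geq 2999$.

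For the large-$p$ case I would observe that $g(p)$ is the product of $e^{-\gamma}$ and the two positive functions $p/\ln p$ and $1-0.2/\ln^{2}p$, each increasing on $(e,\infty)$ by elementary derivative computations. Hence $g$ is increasing on $[2999,\infty)$, and it suffices to verify $g(2999)\geq 27/8$; this is far from tight, since $g(2999)\approx 209$.

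The small-$p$ case is handled by a parallel argument, but more delicately. I would show that $F$ is increasing on $[19,2971]$ by computing $F'(p)=g'(p)-0.0058$ and bounding $g'$ from below. Writing $g=AB$ with $A(p)=p/(e^{\gamma}\ln p)$ and $B(p)=1-0.2/\ln^{2}p$, one has $A'(p)=(\ln p-1)/(e^{\gamma}\ln^{2}p)$ and $B'(p)=0.4/(p\ln^{3}p)$, both positive on the interval, so $g'\geq A'B$. The substitution $u=\ln p$ turns the desired bound $A'(p)B(p)\geq 0.0058$ into an inequality of the form $(u-1)/u^{2}\geq k$ for a constant $k$ of order $0.01$; the roots of the associated quadratic $ku^{2}-u+1=0$ lie far outside the target interval $[\ln 19,\ln 2971]\approx [2.94,8.00]$, so the bound holds comfortably throughout. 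With monotonicity established, it remains to evaluate $F(19)$ and check that it exceeds $27/8=3.375$; a direct numerical computation gives $F(19)\approx 3.43$.

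The main obstacle is the left endpoint of the small-$p$ case, where the slack $F(19)-27/8$ is only about $0.05$. The constants $0.0058$, $0.2/\ln^{2}p$, and the value $11/8$ inherited from Lemma \ref{lem:power2bound} are evidently calibrated so that the bound at $p=19$ is just nontrivial; everywhere else there is enormous cushion. Consequently the only real care required is a sufficiently precise numerical evaluation at $p=19$ and a clean argument for monotonicity of $F$ across the interval, both routine.
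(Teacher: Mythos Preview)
Your argument is correct and, for the range $19\le p\le 2971$, genuinely different from the paper's. The paper handles that range by brute force: it replaces $\alpha$ by $d$, observes that the right-hand side is then below $2.01$, and tabulates the left-hand side for every prime from $19$ to $2971$ (Tables~\ref{tab:dataDusart1} and~\ref{tab:dataDusart2}) to check that each value exceeds $2.01$. You instead apply Lemma~\ref{lem:power2bound} uniformly---the paper invokes it only for $p\ge 2999$---to absorb the $\alpha$-dependent terms into the constant $27/8$, and then dispose of the resulting one-variable inequality $F(p)\ge 27/8$ by a monotonicity-plus-endpoint argument. For $p\ge 2999$ the two proofs coincide.

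Your route buys a dramatic economy: one derivative bound and one numerical evaluation at $p=19$ replace over four hundred tabulated values. The paper's route has the modest advantage that its table verification needs no calculus and leaves more visible slack (the tabulated $f(19)\approx 2.06$ against the target $2.01$, versus your $F(19)\approx 3.43$ against $3.375$). Two small points worth tightening: when you say the roots of $ku^2-u+1=0$ ``lie far outside'' $[\ln 19,\ln 2971]$, what you actually need is that this interval lies \emph{between} the roots (so that $ku^2-u+1\le 0$ there); and you should be explicit that the factor $1-0.2/u^2$ is at least $0.97$ on the interval, so folding it into $k$ does not disturb the root locations. Neither affects the validity of the argument.
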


\begin{proof}
Assume first that $19\leq p\leq 2971$, and let $c=1.25506$ and $d=(p-2c)/2\ln p$. Since $\alpha\geq d$, it suffices to show that \[\frac{p}{e^{\gamma}\ln p}\left(1-\frac{0.2}{\ln^2 p}\right)-0.0058p-\frac{2^{\frac{cp}{\ln p}}}{p^{d-1}}\geq 2+\frac{1}{p^{d-1}}.\] Set \[f(p)=\frac{p}{e^{\gamma}\ln p}\left(1-\frac{0.2}{\ln^2 p}\right)-0.0058p-\frac{2^{\frac{cp}{\ln p}}}{p^{d-1}}\] and \[g(p)=2+\frac{1}{p^{d-1}}.\] It is evident that $d>2$ and, hence, $g(p)<2.01$ for all $p\geq 19$, so it suffices to show that $f(p)>2.01$. Tables \ref{tab:dataDusart1} and \ref{tab:dataDusart2} on pages \pageref{tab:dataDusart1} and \pageref{tab:dataDusart2} provide the value of $f(p)$ rounded to two decimal places; clearly, $f(p)>2.01>g(p)$ for all prime $19\leq p\leq 2971$.

Assume now that $p\geq 2999$. Since $\alpha\geq d$, it suffices to show that \[\frac{p}{e^{\gamma}\ln p}\left(1-\frac{0.2}{\ln^2 p}\right)-\frac{2^{\frac{cp}{\ln p}}}{p^{d-1}}\geq 2+\frac{1}{p^{d-1}}.\] Moreover, Lemma \ref{lem:power2bound} implies that it is sufficient to show \[g(p)=\frac{p}{e^{\gamma}\ln p}\left(1-\frac{0.2}{\ln^2 p}\right)\geq \frac{27}{8}.\] But this follows immediately from the fact that $g(2999)\approx 209.7\geq 27/8$ and $g(p)$ is clearly increasing.
\end{proof}

\begin{lem}[Theorem 6.12 in \cite{bib:Dusart}]\label{lem:Dusart}
If $x\geq 2973$, then \[\prod_{p_i\leq x}\left(1-\frac{1}{p_i}\right)>\frac{1}{e^{\gamma}\ln x}\left(1-\frac{0.2}{\ln^2 x}\right).\]
\end{lem}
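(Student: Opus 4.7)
The plan is to deduce the bound from an explicit effective form of Mertens' second theorem, following the classical reduction. Taking logarithms turns the stated inequality into
\[\sum_{p_i\leq x}\ln\left(1-\frac{1}{p_i}\right) > -\gamma - \ln\ln x + \ln\left(1-\frac{0.2}{\ln^2 x}\right).\]
Expanding $\ln(1-1/p)=-1/p-\sum_{k\geq 2}1/(kp^k)$ and summing over $p_i\leq x$, the left-hand side splits as
\[-\sum_{p_i\leq x}\frac{1}{p_i}-\left[\sum_{p_i}\sum_{k\geq 2}\frac{1}{kp_i^k}-\sum_{p_i>x}\sum_{k\geq 2}\frac{1}{kp_i^k}\right].\]
The full double sum equals $\gamma-M$, where $M$ is the Meissel--Mertens constant, via the identity $M=\gamma+\sum_p\bigl(\ln(1-1/p)+1/p\bigr)$, and the tail over $p_i>x$ is easily bounded by a constant multiple of $1/(x\ln x)$. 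Everything therefore reduces to an explicit effective Mertens' second theorem of the shape $\sum_{p_i\leq x}1/p_i=\ln\ln x+M+R(x)$, with a concrete upper bound on $|R(x)|$ strong enough that, after exponentiation via $e^{-t}\geq 1-t$, produces the factor $1-0.2/\ln^2 x$.

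To obtain such a bound on $R(x)$, I would start from an explicit effective form of the prime number theorem for Chebyshev's function,
\[|\theta(x)-x|\leq \frac{Cx}{\ln^N x},\]
with concrete constants $C$ and $N$, and apply Abel summation twice: once to pass from $\theta(x)=\sum_{p\leq x}\ln p$ to $\sum_{p\leq x}(\ln p)/p$ (an explicit Mertens' first theorem), and again to get from there to $\sum_{p\leq x}1/p$. Each step contributes a controlled integral of the error, leaving an $R(x)$ of order $1/\ln^N x$ with explicit constants. For $x$ below some moderate cutoff of order a few thousand, the asymptotic estimate must be supplemented by a direct prime-by-prime numerical verification of $\prod_{p_i\leq x}(1-1/p_i)$ against the target.

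The main obstacle is the effective PNT input: making all the constants small enough that the resulting bound on $R(x)$ tightens, after exponentiation, to precisely the factor $0.2/\ln^2 x$ uniformly for $x\geq 2973$ is a delicate computation. It rests on a Vinogradov--Korobov-type zero-free region for $\zeta(s)$, Rosser--Schoenfeld-style explicit contour-integral estimates, and numerical verification throughout a sizeable computational range --- precisely the machinery developed in \cite{bib:Dusart}, which is why the present paper imports this lemma rather than reproving it. The tools already stated, namely Lemmas \ref{lem:pibound} and \ref{lem:Bertrandtype}, are far too blunt to deliver an estimate of this strength on their own.
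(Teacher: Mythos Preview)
The paper offers no proof of this lemma whatsoever: it is quoted verbatim as Theorem~6.12 of Dusart~\cite{bib:Dusart} and used as a black box. You recognise this explicitly at the end of your write-up, so there is nothing to compare --- your sketch is not competing with any argument in the paper.

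That said, your outline of \emph{how} such a result is obtained is broadly accurate. The reduction via logarithms to an effective Mertens' second theorem, the identification of the constant $\gamma-M$ from the tail of the series expansion of $\ln(1-1/p)$, and the two Abel summations from an explicit bound on $|\theta(x)-x|$ are indeed the standard route, and Dusart's paper follows essentially this scheme (feeding in his own explicit PNT error terms and then verifying numerically below the analytic cutoff). Your closing remark that Lemmas~\ref{lem:pibound} and~\ref{lem:Bertrandtype} are far too weak to recover this bound is also correct and worth stating: it makes clear why the present paper must import the result rather than derive it from the tools already in hand.
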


\begin{cor}\label{cor:Dusartcor}
If $19\leq p\leq 2971$ is prime, then \[\prod_{p_i<p}\left(1-\frac{1}{p_i}\right)>\frac{1}{e^{\gamma}\ln p}\left(1-\frac{0.2}{\ln^2 p}\right)-0.0058,\] and if $p\geq 2999$ is prime, then \[\prod_{p_i<p}\left(1-\frac{1}{p_i}\right)>\frac{1}{e^{\gamma}\ln p}\left(1-\frac{0.2}{\ln^2 p}\right).\]
\end{cor}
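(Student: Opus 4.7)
My plan is to handle the two cases separately. For $p\geq 2999$, the argument is essentially immediate from Lemma \ref{lem:Dusart}. Since $p\geq 2999 > 2973$, the lemma applies directly at $x = p$:
\[\prod_{p_i\leq p}\left(1-\frac{1}{p_i}\right) > \frac{1}{e^{\gamma}\ln p}\left(1-\frac{0.2}{\ln^2 p}\right).\]
But $p$ itself is prime, so $\prod_{p_i\leq p}(1-1/p_i) = (1-1/p)\prod_{p_i<p}(1-1/p_i)$, and since $0 < 1-1/p < 1$, dividing through yields
\[\prod_{p_i<p}\left(1-\frac{1}{p_i}\right) \;=\; \frac{\prod_{p_i\leq p}(1-1/p_i)}{1-1/p} \;>\; \prod_{p_i\leq p}\left(1-\frac{1}{p_i}\right) \;>\; \frac{1}{e^{\gamma}\ln p}\left(1-\frac{0.2}{\ln^2 p}\right),\]
which is exactly the claim for this case.

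For $19\leq p\leq 2971$, Dusart's bound is not available, but the range contains only finitely many primes (around 428 of them). I would verify the inequality by direct numerical computation. For each such prime $p$, one computes $\prod_{p_i<p}(1-1/p_i)$ and $\frac{1}{e^{\gamma}\ln p}(1-0.2/\ln^2 p)$ and checks that the former exceeds the latter minus $0.0058$. The constant $0.0058$ is evidently calibrated to be just above the worst-case deficit $\frac{1}{e^{\gamma}\ln p}(1-0.2/\ln^2 p) - \prod_{p_i<p}(1-1/p_i)$ over the range, and a table of these quantities (in the style of Tables \ref{tab:dataDusart1} and \ref{tab:dataDusart2}) would document the verification. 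This deficit is largest for the smallest primes in the range and shrinks as $p$ grows, since Mertens' third theorem gives $\prod_{p_i<p}(1-1/p_i) \sim 1/(e^{\gamma}\ln p)$.

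The main obstacle is purely computational: the case $p\geq 2999$ is automatic once Dusart is in hand, while the finite case is tedious but routine, requiring only that the numerical computation be carried out with enough precision to preserve the strict inequality after subtracting $0.0058$.
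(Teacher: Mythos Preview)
Your proposal is correct and matches the paper's approach exactly: finite numerical verification for $19\leq p\leq 2971$ (the paper records this in Tables \ref{tab:dataDusartCor1}--\ref{tab:dataDusartCor4}), and for $p\geq 2999$ an immediate appeal to Lemma \ref{lem:Dusart}. Your explicit step passing from $\prod_{p_i\leq p}$ to $\prod_{p_i<p}$ via division by $1-1/p<1$ is precisely the ``follows immediately'' the paper leaves implicit.
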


\begin{proof}
Assume first that $19\leq p\leq 2971$, and set \[f(p)=\prod_{p_i<p}\left(1-\frac{1}{p_i}\right)\] and \[g(p)=\frac{1}{e^{\gamma}\ln p}\left(1-\frac{0.2}{\ln^2 p}\right)-0.0058.\] Tables \ref{tab:dataDusartCor1}-\ref{tab:dataDusartCor4} on pages \pageref{tab:dataDusartCor1}-\pageref{tab:dataDusartCor4} provide the values of $f(p)$ and $g(p)$ rounded to three decimal places; clearly, $f(p)>g(p)$ for all prime $19\leq p\leq 2971$.

The inequality for $p\geq 2999$ follows immediately from Lemma \ref{lem:Dusart}.
\end{proof}

We are now ready to complete the proof of Theorem \ref{thm:mainthm}. In what follows, we make repeated use of the inequality $\lf x/y\rf \geq x/y-1$.

\begin{lem}\label{lem:phiboundcase4}
If $p\geq 11$ is prime and $n>p^d$, where $d=(p-2c)/2\ln p$ and $c=1.25506$, then $\Phi(n,p) \geq \lf 2n/p\rf+1$.
\end{lem}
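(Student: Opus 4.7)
The plan is to invoke the Legendre-formula estimate from Lemma \ref{lem:phiform}---whose proof in fact yields the identical bound with $n$ in place of $p^\alpha$---together with the Mertens-type lower bound of Corollary \ref{cor:Dusartcor} and the technical inequality of Lemma \ref{lem:eulerconstant}. Set $\alpha := \log_p n$, so that $\alpha > d$ and
\[\Phi(n,p) \;\geq\; nA - 2^{\pi(p)}, \qquad A := \prod_{p_i<p}\!\left(1-\tfrac{1}{p_i}\right).\]
Using $\pi(p) < cp/\ln p$ from Lemma \ref{lem:pibound} together with $\lf 2n/p\rf + 1 \leq 2n/p + 1$, the claim reduces to $n(A - 2/p) \geq 2^{cp/\ln p} + 1$. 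Dividing through by $p^{\alpha-1}$ rearranges this to
\[pA - \frac{2^{cp/\ln p}}{p^{\alpha-1}} \;\geq\; 2 + \frac{1}{p^{\alpha-1}}, \qquad (\ast)\]
whose left-hand side is nondecreasing and right-hand side nonincreasing in $\alpha$, so it suffices to verify $(\ast)$ at $\alpha = d$.

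For $p \geq 19$ I would combine Corollary \ref{cor:Dusartcor} (to bound $pA$ below by its Mertens expression) with Lemma \ref{lem:eulerconstant} (which is designed precisely to deliver $(\ast)$ at $\alpha = d$ after this substitution). The split into $19 \leq p \leq 2971$ and $p \geq 2999$ is inherited from those two results: in the lower range the slack term $0.0058p$ from Corollary \ref{cor:Dusartcor} must be carried through into Lemma \ref{lem:eulerconstant}, while in the higher range Dusart's inequality is applied without correction.

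The main obstacle is the trio $p \in \{11, 13, 17\}$, which lies outside the hypothesis of Lemma \ref{lem:eulerconstant}. Here the generic bound $2^{cp/\ln p}$ is too crude, and I would replace it by the exact constant $2^{\pi(p)-1}$ that the sieve argument in the proof of Lemma \ref{lem:phiform} actually produces, together with the explicit rational values $A_{11}=8/35$, $A_{13}=16/77$, $A_{17}=192/1001$. This yields an explicit threshold $N_p$ beyond which the sharpened version of $(\ast)$ holds outright---for $p=17$ a short calculation gives $N_{17} < p^d$, so nothing further is required---while for $p\in\{11,13\}$ the finite range $p^d < n \leq N_p$ (shortened further by appealing to Lemmas \ref{lem:phiboundcase2} and \ref{lem:phiboundcase3} wherever their hypotheses still apply) is resolved by tabulating $X_n^p$ in Appendix A, exactly in the style of the earlier small-case arguments.
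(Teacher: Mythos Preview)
Your proposal is correct and follows essentially the same route as the paper: for $p\ge 19$ your chain through Lemmas \ref{lem:phiform}, \ref{lem:pibound}, \ref{lem:eulerconstant} and Corollary \ref{cor:Dusartcor} reproduces the paper's argument verbatim, and for $p\in\{11,13,17\}$ the paper likewise plugs in the exact product $A_p$ and finishes the small primes by a threshold-plus-tabulation argument. The only difference is that the paper writes out the Legendre inclusion--exclusion for these three primes explicitly and bounds only the positive-$\mu$ floors from below, obtaining the error $2^{\pi(p)-2}-1$ rather than your $2^{\pi(p)-1}$; this roughly halves $N_{11}$ and $N_{13}$ (so less tabulation is needed) but does not change the structure of the proof.
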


\begin{proof}
First, suppose $p=11$ and assume $11^d\approx 69.8 < n<150$. Recall that for a fixed $p$, $\Phi(n,p)$ is increasing in $n$. The maximum value of $\left\lfloor 2n/11 \right\rfloor+1$ for $11^d\approx 69.8 < n<150$ is $\left\lfloor 2(149)/11 \right\rfloor+1=28$, so we simply need to verify the inequality starting at $n=70$ and continuing until we obtain a value of $\Phi(n,11)$ at least 28. The required values are given in Table \ref{tab:dataphi4a} on page \pageref{tab:dataphi4a}, where $\Phi$ stands in the place of $\Phi(n,11)$ and $b=\left\lfloor 2n/11 \right\rfloor+1$.

Assume now that $n\geq 150$; we estimate $\Phi(n,11)$ directly: \[\begin{array}{rcl}
\Phi(n,11) & = & n-\lf\frac{n}{2}\rf-\lf\frac{n}{3}\rf-\lf\frac{n}{5}\rf-\lf\frac{n}{7}\rf+\lf\frac{n}{6}\rf+\lf\frac{n}{10}\rf+\lf\frac{n}{14}\rf+\lf\frac{n}{15}\rf \\
\\
& & +\lf\frac{n}{21}\rf+\lf\frac{n}{35}\rf-\lf\frac{n}{30}\rf-\lf\frac{n}{42}\rf-\lf\frac{n}{70}\rf-\lf\frac{n}{105}\rf+\lf\frac{n}{210}\rf \\
\\
& \geq & n-\frac{n}{2}-\frac{n}{3}-\frac{n}{5}-\frac{n}{7}+\left(\frac{n}{6}-1\right)+\left(\frac{n}{10}-1\right)+\left(\frac{n}{14}-1\right)+\left(\frac{n}{15}-1\right) \\
\\
& & +\left(\frac{n}{21}-1\right)+\left(\frac{n}{35}-1\right)-\frac{n}{30}-\frac{n}{42}-\frac{n}{70}-\frac{n}{105}+\left(\frac{n}{210}-1\right) \\
\\
& = & \frac{48n}{210}-7.
\end{array}\] For $n\geq 150$, \[\frac{48n}{210}-7\geq \lf\frac{2n}{11}\rf+1.\]

Suppose now that $p=13$, and assume $13^d\approx 189.6 < n<297$. Again, $\Phi(n,p)$ is increasing in $n$ for $p=13$. The maximum value of $\left\lfloor 2n/13 \right\rfloor+1$ for $13^d\approx 189.6 < n<297$ is $\left\lfloor 2(296)/13 \right\rfloor+1=46$, so we simply need to verify the inequality starting at $n=190$ and continuing until we obtain a value of $\Phi(n,13)$ at least 46. The required values are given in Table \ref{tab:dataphi4b} on page \pageref{tab:dataphi4b}, where $\Phi$ stands in the place of $\Phi(n,13)$ and $b=\left\lfloor 2n/13 \right\rfloor+1$.

Assume now that $n\geq 297$; we use an argument similar to the one above to obtain $\Phi(n,13)\geq 480n/2310-15$. For $n\geq 297$, \[\frac{480n}{2310}-15\geq \lf\frac{2n}{13}\rf+1.\]

Likewise for $p=17$, we obtain $\Phi(n,17)\geq 5760n/30030-31$. For all $n> 17^d\approx 1401$, \[\frac{5760n}{30030}-31\geq \lf\frac{2n}{17}\rf+1.\]

Suppose now that $19\leq p\leq 2971$ and $n>p^d$. Set $n=p^\alpha$, where $\alpha>d$; from Lemma \ref{lem:phiform} we know \[\Phi(p^\alpha,p)\geq p^\alpha\prod_{p_i<p}\left(1-\frac{1}{p_i}\right)-2^{\pi(p)}.\] Using Lemma \ref{lem:pibound} and Corollary \ref{cor:Dusartcor}, this implies \[\begin{array}{rcl}\Phi(p^\alpha,p)& \geq &  p^\alpha\left[\frac{1}{e^{\gamma}\ln p}\left(1-\frac{0.2}{\ln^2 p}\right)-0.0058\right]-2^{\frac{cp}{\ln p}}\\ \\
& = & p^{\alpha-1}\left[\frac{p}{e^{\gamma}\ln p}\left(1-\frac{0.2}{\ln^2 p}\right)-0.0058p-\frac{2^{\frac{cp}{\ln p}}}{p^{\alpha-1}}\right],
\end{array}\] and using Lemma \ref{lem:eulerconstant} yields \[\Phi(p^\alpha,p)\geq p^{\alpha-1}\left(2+\frac{1}{p^{\alpha-1}}\right)=2p^{\alpha-1}+1\geq \lf\frac{2p^\alpha}{p}\rf+1.\]

Finally, suppose that $p\geq 2999$ and $n>p^d$. We again set $n=p^\alpha$, where $\alpha>d$; using an argument similar to the preceding paragraph, we obtain via Lemmas \ref{lem:pibound}, \ref{lem:phiform}, \ref{lem:eulerconstant} and Corollary \ref{cor:Dusartcor} that \[\begin{array}{rcl} 
\Phi(p^\alpha,p) & \geq & {\displaystyle p^\alpha\prod_{p_i<p}\left(1-\frac{1}{p_i}\right)-2^{\pi(p)} }\\ \\
& \geq &  p^\alpha\frac{1}{e^{\gamma}\ln p}\left(1-\frac{0.2}{\ln^2 p}\right)-2^{\frac{cp}{\ln p}}\\ \\
& = & p^{\alpha-1}\left[\frac{p}{e^{\gamma}\ln p}\left(1-\frac{0.2}{\ln^2 p}\right)-\frac{2^{\frac{cp}{\ln p}}}{p^{\alpha-1}}\right] \\ \\
& \geq & p^{\alpha-1}\left(2+\frac{1}{p^{\alpha-1}}\right)=2p^{\alpha-1}+1\geq \lf\frac{2p^\alpha}{p}\rf+1.
\end{array}\] This completes the proof.
\end{proof}


\addresseshere



\newpage

\section*{Appendix A: The tables}

\begin{table}[H]
\centerline{\begin{tabular}{|l|l|l|l|}
\hline
\multicolumn{1}{|c|}{$p$} & \multicolumn{1}{|c|}{$n$} & \multicolumn{1}{|c|}{$\Phi(n,p)$} & \multicolumn{1}{|c|}{$X_n^p$} \\
\hline
\multirow{3}{*}{11} & 22 & 5 & $\{1,11,13,17,19\}$ \\
& 27.5 & 6 & $\{1,11,13,17,19,23\}$ \\
& 33 & 8 & $\{1,11,13,17,19,23,29,31\}$ \\
\hline
\multirow{3}{*}{13} & 26 & 5 & $\{1,13,17,19,23\}$ \\
& 32.5 & 7 & $\{1,13,17,19,23,29,31\}$ \\
& 39 & 8 & $\{1,13,17,19,23,29,31,37\}$ \\
\hline
\multirow{3}{*}{17} & 34 & 6 & $\{1,17,19,23,29,31\}$ \\
& 42.5 & 8 & $\{1,17,19,23,29,31,37,41\}$ \\
& 51 & 10 & $\{1,17,19,23,29,31,37,41,43,47\}$ \\
\hline
\multirow{3}{*}{19} & 38 & 6 & $\{1,19,23,29,31,37\}$ \\
& 47.5 & 9 & $\{1,19,23,29,31,37,41,43,47\}$ \\
& 57 & 10 & $\{1,19,23,29,31,37,41,43,47,53\}$ \\
\hline
\multirow{3}{*}{23} & 46 & 7 & $\{1,23,29,31,37,41,43\}$ \\
& 57.5 & 9 & $\{1,23,29,31,37,41,43,47,53\}$ \\
& 69 & 12 & $\{1,23,29,31,37,41,43,47,53,59,61,67\}$ \\
\hline
\end{tabular}}
\caption{Data for the proof of Lemma \ref{lem:phiboundcase1}.}\label{tab:dataphi1}
\end{table}

\begin{table}[H]
\centerline{\begin{tabular}{l|cccccccccccc}
\multicolumn{1}{r|}{$z=$}  & 7 & 8 & 9 & 10 & 11 & 12 & 13 & 14 & 15 & 16 & 17 & 18 \\
\hline
$\Phi(11z/2),11)$ & 9 & 11 & 12 & 13 & 14 & 15 & 17 & 18 & 19 & 20 & 21 & 22 \\
$\Phi(13z/2),13)$ & 10 & 11 & 12 & 14 & 16 & 17 & 19 & 20 & 21 & 23 & 25 & 26 \\
$\Phi(17z/2),17)$ & 12 & 14 & 16 & 18 & 19 & 21 & 24 & 25 & 26 & 27 & 29 & 31 \\
$\Phi(19z/2),19)$ & 12 & 15 & 17 & 18 & 21 & 24 & 24 & 26 & 28 & 30 & 31 & 33 \\
$\Phi(23z/2),23)$ & 15 & 17 & 20 & 23 & 23 & 26 & 28 & 30 & 32 & 35 & 37 & 39 \\
$\Phi(29z/2),29)$ & 18 & 22 & 23 & 26 & 29 & 32 & 34 & 38 & 39 & 42 & 45 & 47 \\
$\Phi(31z/2),31)$ & 19 & 21 & 25 & 27 & 30 & 33 & 37 & 38 & 41 & 44 & 47 & 50 \\
$\Phi(37z/2),37)$ & 21 & 24 & 28 & 32 & 36 & 37 & 42 & 45 & 49 & 52 & 55 & 57 \\
$\Phi(41z/2),41)$ & 23 & 27 & 31 & 35 & 37 & 42 & 45 & 50 & 52 & 55 & 58 & 62 \\
$\Phi(43z/2),43)$ & 23 & 27 & 32 & 35 & 39 & 43 & 47 & 50 & 54 & 56 & 60 & 64 \\
$\Phi(47z/2),47)$ & 25 & 29 & 34 & 38 & 42 & 47 & 49 & 53 & 57 & 61 & 65 & 69 \\
$\Phi(53z/2),53)$ & 28 & 33 & 37 & 42 & 47 & 52 & 54 & 59 & 64 & 68 & 73 & 77 \\
$\Phi(59z/2),59)$ & 31 & 36 & 41 & 47 & 51 & 56 & 61 & 65 & 70 & 76 & 80 & 84 \\
$\Phi(61z/2),61)$ & 31 & 37 & 42 & 46 & 51 & 56 & 61 & 66 & 72 & 77 & 81 & 85 \\
$\Phi(67z/2),67)$ & 34 & 39 & 45 & 50 & 56 & 62 & 67 & 74 & 78 & 82 & 87 & 93 \\
$\Phi(71z/2),71)$ & 35 & 43 & 48 & 53 & 59 & 64 & 71 & 76 & 81 & 85 & 92 & 97 \\
$\Phi(73z/2),73)$ & 35 & 42 & 47 & 53 & 60 & 65 & 72 & 78 & 82 & 87 & 95 & 100 \\
$\Phi(79z/2),79)$ & 38 & 45 & 51 & 57 & 64 & 71 & 77 & 81 & 87 & 95 & 101 & 107 \\
$\Phi(83z/2),83)$ & 40 & 46 & 53 & 59 & 66 & 73 & 78 & 85 & 93 & 100 & 105 & 111 \\
$\Phi(89z/2),89)$ & 42 & 49 & 56 & 64 & 71 & 77 & 84 & 92 & 99 & 105 & 111 & 117 \\
\end{tabular}}
\caption{Data for the proof of Lemma \ref{lem:phiboundcase2} for $7\leq z\leq 18$.}\label{tab:dataphi2a}
\end{table}

\begin{table}[H]
\centerline{\begin{tabular}{l|cccccccccc}
\multicolumn{1}{r|}{$z=$}  & 19 & 20 & 21 & 22 & 23 & 24 & 25 & 26 & 27 & 28 \\
\hline
$\Phi(11z/2),11)$ & 24 & \multicolumn{9}{c}{} \\
$\Phi(13z/2),13)$ & 26 & 27 & \multicolumn{8}{c}{} \\
$\Phi(17z/2),17)$ & 32 & 34 & 35 & \multicolumn{7}{c}{} \\
$\Phi(19z/2),19)$ & 35 & 36 & 40 & \multicolumn{7}{c}{} \\
$\Phi(23z/2),23)$ & 40 & 43 & 46 & 47 & \multicolumn{6}{c}{} \\
$\Phi(29z/2),29)$ & 50 & 53 & 54 & 58 & 59 & \multicolumn{5}{c}{} \\
$\Phi(31z/2),31)$ & 53 & 54 & 57 & 59 & 62 & 64 & \multicolumn{4}{c}{} \\
$\Phi(37z/2),37)$ & 60 & 63 & 66 & 69 & 72 & 76 & \multicolumn{4}{c}{} \\
$\Phi(41z/2),41)$ & 66 & 69 & 71 & 76 & 80 & 83 & \multicolumn{4}{c}{} \\
$\Phi(43z/2),43)$ & 67 & 70 & 75 & 79 & 82 & 85 & 87 & \multicolumn{3}{c}{} \\
$\Phi(47z/2),47)$ & 73 & 78 & 81 & 84 & 86 & 90 & 94 & 98 & \multicolumn{2}{c}{} \\
$\Phi(53z/2),53)$ & 82 & 85 & 87 & 92 & 97 & 101 & 107 & \multicolumn{3}{c}{} \\
$\Phi(59z/2),59)$ & 87 & 92 & 99 & 103 & 108 & 111 & 115 & 120 & \multicolumn{2}{c}{} \\
$\Phi(61z/2),61)$ & 90 & 95 & 99 & 105 & 110 & 113 & 119 & 122 & 127 & \\
$\Phi(67z/2),67)$ & 98 & 104 & 109 & 113 & 119 & 122 & 128 & 133 & 137 & \\
$\Phi(71z/2),71)$ & 104 & 109 & 114 & 119 & 123 & 128 & 136 & 139 & 144 & \\
$\Phi(73z/2),73)$ & 106 & 110 & 116 & 120 & 127 & 131 & 137 & 142 & 147 & \\
$\Phi(79z/2),79)$ & 112 & 118 & 125 & 130 & 135 & 141 & 146 & 152 & 159 & \\
$\Phi(83z/2),83)$ & 117 & 124 & 129 & 135 & 141 & 146 & 153 & 159 & 166 & 170 \\
$\Phi(89z/2),89)$ & 124 & 132 & 136 & 143 & 150 & 157 & 164 & 169 & 175 & 181 \\
\end{tabular}}
\caption{Data for the proof of Lemma \ref{lem:phiboundcase2} for $19\leq z\leq 28$.}\label{tab:dataphi2b}
\end{table}

\begin{table}[H]
\centerline{\begin{tabular}{ll|ll|ll|ll|ll}
\multicolumn{1}{c}{$p$} & \multicolumn{1}{c|}{$f(p)$} & \multicolumn{1}{c}{$p$} & \multicolumn{1}{c|}{$f(p)$} & \multicolumn{1}{c}{$p$} & \multicolumn{1}{c|}{$f(p)$} & \multicolumn{1}{c}{$p$} & \multicolumn{1}{c|}{$f(p)$} & \multicolumn{1}{c}{$p$} & \multicolumn{1}{c}{$f(p)$} \\
\hline
19 & 2.06 & 233 & 22.49 & 491 & 41.41 & 773 & 60.48 & 1069 & 79.50 \\
23 & 3.42 & 239 & 22.95 & 499 & 41.97 & 787 & 61.40 & 1087 & 80.63 \\
29 & 4.49 & 241 & 23.11 & 503 & 42.25 & 797 & 62.06 & 1091 & 80.89 \\
31 & 4.75 & 251 & 23.88 & 509 & 42.67 & 809 & 62.84 & 1093 & 81.01 \\
37 & 5.44 & 257 & 24.34 & 521 & 43.50 & 811 & 62.97 & 1097 & 81.26 \\
41 & 5.87 & 263 & 24.80 & 523 & 43.64 & 821 & 63.62 & 1103 & 81.64 \\
43 & 6.08 & 269 & 25.26 & 541 & 44.88 & 823 & 63.76 & 1109 & 82.02 \\
47 & 6.49 & 271 & 25.42 & 547 & 45.30 & 827 & 64.02 & 1117 & 82.52 \\
53 & 7.09 & 277 & 25.87 & 557 & 45.98 & 829 & 64.15 & 1123 & 82.89 \\
59 & 7.68 & 281 & 26.18 & 563 & 46.40 & 839 & 64.80 & 1129 & 83.27 \\
61 & 7.88 & 283 & 26.33 & 569 & 46.81 & 853 & 65.71 & 1151 & 84.64 \\
67 & 8.46 & 293 & 27.08 & 571 & 46.95 & 857 & 65.97 & 1153 & 84.77 \\
71 & 8.84 & 307 & 28.13 & 577 & 47.36 & 859 & 66.09 & 1163 & 85.39 \\
73 & 9.03 & 311 & 28.43 & 587 & 48.04 & 863 & 66.35 & 1171 & 85.89 \\
79 & 9.59 & 313 & 28.58 & 593 & 48.45 & 877 & 67.26 & 1181 & 86.51 \\
83 & 9.96 & 317 & 28.88 & 599 & 48.86 & 881 & 67.52 & 1187 & 86.88 \\
89 & 10.51 & 331 & 29.92 & 601 & 48.99 & 883 & 67.65 & 1193 & 87.25 \\
97 & 11.23 & 337 & 30.36 & 607 & 49.40 & 887 & 67.91 & 1201 & 87.75 \\
101 & 11.59 & 347 & 31.10 & 613 & 49.81 & 907 & 69.19 & 1213 & 88.50 \\
103 & 11.76 & 349 & 31.25 & 617 & 50.08 & 911 & 69.45 & 1217 & 88.74 \\
107 & 12.12 & 353 & 31.54 & 619 & 50.21 & 919 & 69.97 & 1223 & 89.11 \\
109 & 12.29 & 359 & 31.98 & 631 & 51.03 & 929 & 70.61 & 1229 & 89.49 \\
113 & 12.65 & 367 & 32.56 & 641 & 51.70 & 937 & 71.12 & 1231 & 89.61 \\
127 & 13.86 & 373 & 33.00 & 643 & 51.84 & 941 & 71.38 & 1237 & 89.98 \\
131 & 14.20 & 379 & 33.44 & 647 & 52.11 & 947 & 71.76 & 1249 & 90.72 \\
137 & 14.71 & 383 & 33.73 & 653 & 52.51 & 953 & 72.14 & 1259 & 91.34 \\
139 & 14.88 & 389 & 34.16 & 659 & 52.91 & 967 & 73.04 & 1277 & 92.45 \\
149 & 15.72 & 397 & 34.74 & 661 & 53.05 & 971 & 73.29 & 1279 & 92.57 \\
151 & 15.89 & 401 & 35.03 & 673 & 53.85 & 977 & 73.68 & 1283 & 92.82 \\
157 & 16.39 & 409 & 35.60 & 677 & 54.12 & 983 & 74.06 & 1289 & 93.19 \\
163 & 16.88 & 419 & 36.32 & 683 & 54.52 & 991 & 74.57 & 1291 & 93.31 \\
167 & 17.21 & 421 & 36.46 & 691 & 55.05 & 997 & 74.95 & 1297 & 93.68 \\
173 & 17.70 & 431 & 37.18 & 701 & 55.72 & 1009 & 75.71 & 1301 & 93.92 \\
179 & 18.19 & 433 & 37.32 & 709 & 56.25 & 1013 & 75.96 & 1303 & 94.05 \\
181 & 18.35 & 439 & 37.74 & 719 & 56.92 & 1019 & 76.34 & 1307 & 94.29 \\
191 & 19.16 & 443 & 38.03 & 727 & 57.45 & 1021 & 76.47 & 1319 & 95.03 \\
193 & 19.32 & 449 & 38.45 & 733 & 57.84 & 1031 & 77.10 & 1321 & 95.15 \\
197 & 19.64 & 457 & 39.02 & 739 & 58.24 & 1033 & 77.23 & 1327 & 95.52 \\
199 & 19.80 & 461 & 39.30 & 743 & 58.51 & 1039 & 77.61 & 1361 & 97.60 \\
211 & 20.76 & 463 & 39.44 & 751 & 59.03 & 1049 & 78.24 & 1367 & 97.96 \\
223 & 21.70 & 467 & 39.73 & 757 & 59.43 & 1051 & 78.37 & 1373 & 98.33 \\
227 & 22.02 & 479 & 40.57 & 761 & 59.69 & 1061 & 79.00 & 1381 & 98.82 \\
229 & 22.17 & 487 & 41.13 & 769 & 60.22 & 1063 & 79.12 & 1399 & 99.91 \\
\end{tabular}}
\caption{Data for the proof of Lemma \ref{lem:eulerconstant} for $19\leq p\leq 1399$.}\label{tab:dataDusart1}
\end{table}

\begin{table}[H]
\centerline{\begin{tabular}{ll|ll|ll|ll|ll}
\multicolumn{1}{c}{$p$} & \multicolumn{1}{c|}{$f(p)$} & \multicolumn{1}{c}{$p$} & \multicolumn{1}{c|}{$f(p)$} & \multicolumn{1}{c}{$p$} & \multicolumn{1}{c|}{$f(p)$} & \multicolumn{1}{c}{$p$} & \multicolumn{1}{c|}{$f(p)$} & \multicolumn{1}{c}{$p$} & \multicolumn{1}{c}{$f(p)$} \\
\hline
1409 & 100.52 & 1699 & 117.94 & 2039 & 137.89 & 2377 & 157.33 & 2711 & 176.21 \\
1423 & 101.37 & 1709 & 118.53 & 2053 & 138.70 & 2381 & 157.56 & 2713 & 176.32 \\
1427 & 101.61 & 1721 & 119.24 & 2063 & 139.28 & 2383 & 157.67 & 2719 & 176.66 \\
1429 & 101.73 & 1723 & 119.36 & 2069 & 139.63 & 2389 & 158.01 & 2729 & 177.22 \\
1433 & 101.98 & 1733 & 119.95 & 2081 & 140.33 & 2393 & 158.24 & 2731 & 177.33 \\
1439 & 102.34 & 1741 & 120.43 & 2083 & 140.44 & 2399 & 158.58 & 2741 & 177.89 \\
1447 & 102.83 & 1747 & 120.78 & 2087 & 140.67 & 2411 & 159.26 & 2749 & 178.34 \\
1451 & 103.07 & 1753 & 121.14 & 2089 & 140.79 & 2417 & 159.60 & 2753 & 178.56 \\
1453 & 103.19 & 1759 & 121.49 & 2099 & 141.37 & 2423 & 159.95 & 2767 & 179.35 \\
1459 & 103.55 & 1777 & 122.55 & 2111 & 142.06 & 2437 & 160.74 & 2777 & 179.91 \\
1471 & 104.28 & 1783 & 122.91 & 2113 & 142.18 & 2441 & 160.97 & 2789 & 180.58 \\
1481 & 104.88 & 1787 & 123.14 & 2129 & 143.10 & 2447 & 161.31 & 2791 & 180.69 \\
1483 & 105.00 & 1789 & 123.26 & 2131 & 143.22 & 2459 & 161.99 & 2797 & 181.02 \\
1487 & 105.24 & 1801 & 123.97 & 2137 & 143.56 & 2467 & 162.45 & 2801 & 181.25 \\
1489 & 105.37 & 1811 & 124.56 & 2141 & 143.79 & 2473 & 162.79 & 2803 & 181.36 \\
1493 & 105.61 & 1823 & 125.27 & 2143 & 143.91 & 2477 & 163.01 & 2819 & 182.25 \\
1499 & 105.97 & 1831 & 125.74 & 2153 & 144.49 & 2503 & 164.49 & 2833 & 183.04 \\
1511 & 106.69 & 1847 & 126.68 & 2161 & 144.95 & 2521 & 165.50 & 2837 & 183.26 \\
1523 & 107.41 & 1861 & 127.50 & 2179 & 145.99 & 2531 & 166.07 & 2843 & 183.59 \\
1531 & 107.90 & 1867 & 127.85 & 2203 & 147.37 & 2539 & 166.52 & 2851 & 184.04 \\
1543 & 108.62 & 1871 & 128.09 & 2207 & 147.60 & 2543 & 166.75 & 2857 & 184.37 \\
1549 & 108.98 & 1873 & 128.20 & 2213 & 147.94 & 2549 & 167.09 & 2861 & 184.60 \\
1553 & 109.22 & 1877 & 128.44 & 2221 & 148.40 & 2551 & 167.20 & 2879 & 185.60 \\
1559 & 109.58 & 1879 & 128.56 & 2237 & 149.32 & 2557 & 167.54 & 2887 & 186.05 \\
1567 & 110.06 & 1889 & 129.14 & 2239 & 149.44 & 2579 & 168.78 & 2897 & 186.60 \\
1571 & 110.30 & 1901 & 129.84 & 2243 & 149.66 & 2591 & 169.46 & 2903 & 186.94 \\
1579 & 110.78 & 1907 & 130.20 & 2251 & 150.12 & 2593 & 169.57 & 2909 & 187.27 \\
1583 & 111.02 & 1913 & 130.55 & 2267 & 151.04 & 2609 & 170.47 & 2917 & 187.72 \\
1597 & 111.86 & 1931 & 131.60 & 2269 & 151.16 & 2617 & 170.92 & 2927 & 188.27 \\
1601 & 112.10 & 1933 & 131.72 & 2273 & 151.38 & 2621 & 171.15 & 2939 & 188.94 \\
1607 & 112.45 & 1949 & 132.65 & 2281 & 151.84 & 2633 & 171.83 & 2953 & 189.72 \\
1609 & 112.57 & 1951 & 132.77 & 2287 & 152.19 & 2647 & 172.61 & 2957 & 189.94 \\
1613 & 112.81 & 1973 & 134.05 & 2293 & 152.53 & 2657 & 173.18 & 2963 & 190.27 \\
1619 & 113.17 & 1979 & 134.40 & 2297 & 152.76 & 2659 & 173.29 & 2969 & 190.60 \\
1621 & 113.29 & 1987 & 134.87 & 2309 & 153.45 & 2663 & 173.51 & 2971 & 190.72 \\
1627 & 113.65 & 1993 & 135.22 & 2311 & 153.56 & 2671 & 173.96 & &
\end{tabular}}
\caption{Data for the proof of Lemma \ref{lem:eulerconstant} for $1409\leq p\leq 2971$.}\label{tab:dataDusart2}
\end{table}

\begin{table}[H]
\centerline{\begin{tabular}{lll|lll|lll}
\multicolumn{1}{c}{$p$} & \multicolumn{1}{c}{$f(p)$} & \multicolumn{1}{c|}{$g(p)$} & \multicolumn{1}{c}{$p$} & \multicolumn{1}{c}{$f(p)$} & \multicolumn{1}{c|}{$g(p)$} & \multicolumn{1}{c}{$p$} & \multicolumn{1}{c}{$f(p)$} & \multicolumn{1}{c}{$g(p)$} \\
\hline
19 & 0.181 & 0.180 & 233 & 0.102 & 0.097 & 491 & 0.090 & 0.084 \\
23 & 0.171 & 0.170 & 239 & 0.102 & 0.096 & 499 & 0.090 & 0.084 \\
29 & 0.164 & 0.158 & 241 & 0.101 & 0.096 & 503 & 0.090 & 0.084 \\
31 & 0.158 & 0.155 & 251 & 0.101 & 0.095 & 509 & 0.089 & 0.084 \\
37 & 0.153 & 0.147 & 257 & 0.100 & 0.095 & 521 & 0.089 & 0.083 \\
41 & 0.149 & 0.143 & 263 & 0.100 & 0.094 & 523 & 0.089 & 0.083 \\
43 & 0.145 & 0.141 & 269 & 0.100 & 0.094 & 541 & 0.089 & 0.083 \\
47 & 0.142 & 0.138 & 271 & 0.099 & 0.094 & 547 & 0.089 & 0.083 \\
53 & 0.139 & 0.134 & 277 & 0.099 & 0.093 & 557 & 0.089 & 0.083 \\
59 & 0.136 & 0.130 & 281 & 0.098 & 0.093 & 563 & 0.088 & 0.082 \\
61 & 0.134 & 0.129 & 283 & 0.098 & 0.093 & 569 & 0.088 & 0.082 \\
67 & 0.132 & 0.126 & 293 & 0.098 & 0.092 & 571 & 0.088 & 0.082 \\
71 & 0.130 & 0.124 & 307 & 0.097 & 0.092 & 577 & 0.088 & 0.082 \\
73 & 0.128 & 0.124 & 311 & 0.097 & 0.091 & 587 & 0.088 & 0.082 \\
79 & 0.126 & 0.121 & 313 & 0.097 & 0.091 & 593 & 0.088 & 0.082 \\
83 & 0.124 & 0.120 & 317 & 0.097 & 0.091 & 599 & 0.088 & 0.082 \\
89 & 0.123 & 0.118 & 331 & 0.096 & 0.090 & 601 & 0.087 & 0.082 \\
97 & 0.122 & 0.116 & 337 & 0.096 & 0.090 & 607 & 0.087 & 0.081 \\
101 & 0.120 & 0.115 & 347 & 0.096 & 0.090 & 613 & 0.087 & 0.081 \\
103 & 0.119 & 0.114 & 349 & 0.095 & 0.090 & 617 & 0.087 & 0.081 \\
107 & 0.118 & 0.113 & 353 & 0.095 & 0.089 & 619 & 0.087 & 0.081 \\
109 & 0.117 & 0.113 & 359 & 0.095 & 0.089 & 631 & 0.087 & 0.081 \\
113 & 0.116 & 0.112 & 367 & 0.095 & 0.089 & 641 & 0.087 & 0.081 \\
127 & 0.115 & 0.109 & 373 & 0.094 & 0.088 & 643 & 0.086 & 0.081 \\
131 & 0.114 & 0.108 & 379 & 0.094 & 0.088 & 647 & 0.086 & 0.081 \\
137 & 0.113 & 0.107 & 383 & 0.094 & 0.088 & 653 & 0.086 & 0.080 \\
139 & 0.112 & 0.107 & 389 & 0.094 & 0.088 & 659 & 0.086 & 0.080 \\
149 & 0.111 & 0.106 & 397 & 0.093 & 0.088 & 661 & 0.086 & 0.080 \\
151 & 0.111 & 0.105 & 401 & 0.093 & 0.087 & 673 & 0.086 & 0.080 \\
157 & 0.110 & 0.104 & 409 & 0.093 & 0.087 & 677 & 0.086 & 0.080 \\
163 & 0.109 & 0.104 & 419 & 0.093 & 0.087 & 683 & 0.085 & 0.080 \\
167 & 0.109 & 0.103 & 421 & 0.092 & 0.087 & 691 & 0.085 & 0.080 \\
173 & 0.108 & 0.102 & 431 & 0.092 & 0.086 & 701 & 0.085 & 0.079 \\
179 & 0.107 & 0.102 & 433 & 0.092 & 0.086 & 709 & 0.085 & 0.079 \\
181 & 0.107 & 0.101 & 439 & 0.092 & 0.086 & 719 & 0.085 & 0.079 \\
191 & 0.106 & 0.100 & 443 & 0.092 & 0.086 & 727 & 0.085 & 0.079 \\
193 & 0.105 & 0.100 & 449 & 0.091 & 0.086 & 733 & 0.085 & 0.079 \\
197 & 0.105 & 0.100 & 457 & 0.091 & 0.085 & 739 & 0.085 & 0.079 \\
199 & 0.104 & 0.100 & 461 & 0.091 & 0.085 & 743 & 0.085 & 0.079 \\
211 & 0.104 & 0.098 & 463 & 0.091 & 0.085 & 751 & 0.084 & 0.079 \\
223 & 0.103 & 0.097 & 467 & 0.091 & 0.085 & 757 & 0.084 & 0.079 \\
227 & 0.103 & 0.097 & 479 & 0.090 & 0.085 & 761 & 0.084 & 0.078 \\
229 & 0.102 & 0.097 & 487 & 0.090 & 0.084 & 769 & 0.084 & 0.078 \\
\end{tabular}}
\caption{Data for the proof of Corollary \ref{cor:Dusartcor} for $19\leq p\leq 769$.}\label{tab:dataDusartCor1}
\end{table}

\begin{table}[H]
\centerline{\begin{tabular}{lll|lll|lll}
\multicolumn{1}{c}{$p$} & \multicolumn{1}{c}{$f(p)$} & \multicolumn{1}{c|}{$g(p)$} & \multicolumn{1}{c}{$p$} & \multicolumn{1}{c}{$f(p)$} & \multicolumn{1}{c|}{$g(p)$} & \multicolumn{1}{c}{$p$} & \multicolumn{1}{c}{$f(p)$} & \multicolumn{1}{c}{$g(p)$} \\
\hline
773 & 0.084 & 0.078 & 1069 & 0.080 & 0.074 & 1409 & 0.077 & 0.071 \\
787 & 0.084 & 0.078 & 1087 & 0.080 & 0.074 & 1423 & 0.077 & 0.071 \\
797 & 0.084 & 0.078 & 1091 & 0.080 & 0.074 & 1427 & 0.077 & 0.071 \\
809 & 0.084 & 0.078 & 1093 & 0.080 & 0.074 & 1429 & 0.077 & 0.071 \\
811 & 0.084 & 0.078 & 1097 & 0.080 & 0.074 & 1433 & 0.077 & 0.071 \\
821 & 0.083 & 0.077 & 1103 & 0.080 & 0.074 & 1439 & 0.077 & 0.071 \\
823 & 0.083 & 0.077 & 1109 & 0.080 & 0.074 & 1447 & 0.077 & 0.071 \\
827 & 0.083 & 0.077 & 1117 & 0.080 & 0.074 & 1451 & 0.077 & 0.071 \\
829 & 0.083 & 0.077 & 1123 & 0.080 & 0.074 & 1453 & 0.077 & 0.071 \\
839 & 0.083 & 0.077 & 1129 & 0.079 & 0.074 & 1459 & 0.077 & 0.071 \\
853 & 0.083 & 0.077 & 1151 & 0.079 & 0.074 & 1471 & 0.077 & 0.071 \\
857 & 0.083 & 0.077 & 1153 & 0.079 & 0.074 & 1481 & 0.077 & 0.071 \\
859 & 0.083 & 0.077 & 1163 & 0.079 & 0.073 & 1483 & 0.077 & 0.071 \\
863 & 0.083 & 0.077 & 1171 & 0.079 & 0.073 & 1487 & 0.077 & 0.071 \\
877 & 0.083 & 0.077 & 1181 & 0.079 & 0.073 & 1489 & 0.077 & 0.071 \\
881 & 0.082 & 0.077 & 1187 & 0.079 & 0.073 & 1493 & 0.077 & 0.071 \\
883 & 0.082 & 0.077 & 1193 & 0.079 & 0.073 & 1499 & 0.076 & 0.071 \\
887 & 0.082 & 0.077 & 1201 & 0.079 & 0.073 & 1511 & 0.076 & 0.071 \\
907 & 0.082 & 0.076 & 1213 & 0.079 & 0.073 & 1523 & 0.076 & 0.071 \\
911 & 0.082 & 0.076 & 1217 & 0.079 & 0.073 & 1531 & 0.076 & 0.070 \\
919 & 0.082 & 0.076 & 1223 & 0.079 & 0.073 & 1543 & 0.076 & 0.070 \\
929 & 0.082 & 0.076 & 1229 & 0.079 & 0.073 & 1549 & 0.076 & 0.070 \\
937 & 0.082 & 0.076 & 1231 & 0.079 & 0.073 & 1553 & 0.076 & 0.070 \\
941 & 0.082 & 0.076 & 1237 & 0.079 & 0.073 & 1559 & 0.076 & 0.070 \\
947 & 0.082 & 0.076 & 1249 & 0.078 & 0.073 & 1567 & 0.076 & 0.070 \\
953 & 0.082 & 0.076 & 1259 & 0.078 & 0.073 & 1571 & 0.076 & 0.070 \\
967 & 0.081 & 0.076 & 1277 & 0.078 & 0.072 & 1579 & 0.076 & 0.070 \\
971 & 0.081 & 0.075 & 1279 & 0.078 & 0.072 & 1583 & 0.076 & 0.070 \\
977 & 0.081 & 0.075 & 1283 & 0.078 & 0.072 & 1597 & 0.076 & 0.070 \\
983 & 0.081 & 0.075 & 1289 & 0.078 & 0.072 & 1601 & 0.076 & 0.070 \\
991 & 0.081 & 0.075 & 1291 & 0.078 & 0.072 & 1607 & 0.076 & 0.070 \\
997 & 0.081 & 0.075 & 1297 & 0.078 & 0.072 & 1609 & 0.076 & 0.070 \\
1009 & 0.081 & 0.075 & 1301 & 0.078 & 0.072 & 1613 & 0.076 & 0.070 \\
1013 & 0.081 & 0.075 & 1303 & 0.078 & 0.072 & 1619 & 0.076 & 0.070 \\
1019 & 0.081 & 0.075 & 1307 & 0.078 & 0.072 & 1621 & 0.076 & 0.070 \\
1021 & 0.081 & 0.075 & 1319 & 0.078 & 0.072 & 1627 & 0.076 & 0.070 \\
1031 & 0.081 & 0.075 & 1321 & 0.078 & 0.072 & 1637 & 0.076 & 0.070 \\
1033 & 0.081 & 0.075 & 1327 & 0.078 & 0.072 & 1657 & 0.075 & 0.070 \\
1039 & 0.080 & 0.075 & 1361 & 0.078 & 0.072 & 1663 & 0.075 & 0.070 \\
1049 & 0.080 & 0.075 & 1367 & 0.078 & 0.072 & 1667 & 0.075 & 0.070 \\
1051 & 0.080 & 0.075 & 1373 & 0.078 & 0.072 & 1669 & 0.075 & 0.070 \\
1061 & 0.080 & 0.074 & 1381 & 0.077 & 0.072 & 1693 & 0.075 & 0.069 \\
1063 & 0.080 & 0.074 & 1399 & 0.077 & 0.071 & 1697 & 0.075 & 0.069 \\
\end{tabular}}
\caption{Data for the proof of Corollary \ref{cor:Dusartcor} for $773\leq p\leq 1697$.}\label{tab:dataDusartCor2}
\end{table}

\begin{table}[H]
\centerline{\begin{tabular}{lll|lll|lll}
\multicolumn{1}{c}{$p$} & \multicolumn{1}{c}{$f(p)$} & \multicolumn{1}{c|}{$g(p)$} & \multicolumn{1}{c}{$p$} & \multicolumn{1}{c}{$f(p)$} & \multicolumn{1}{c|}{$g(p)$} & \multicolumn{1}{c}{$p$} & \multicolumn{1}{c}{$f(p)$} & \multicolumn{1}{c}{$g(p)$} \\
\hline
1699 & 0.075 & 0.069 & 2039 & 0.073 & 0.068 & 2377 & 0.072 & 0.066 \\
1709 & 0.075 & 0.069 & 2053 & 0.073 & 0.068 & 2381 & 0.072 & 0.066 \\
1721 & 0.075 & 0.069 & 2063 & 0.073 & 0.068 & 2383 & 0.072 & 0.066 \\
1723 & 0.075 & 0.069 & 2069 & 0.073 & 0.067 & 2389 & 0.072 & 0.066 \\
1733 & 0.075 & 0.069 & 2081 & 0.073 & 0.067 & 2393 & 0.072 & 0.066 \\
1741 & 0.075 & 0.069 & 2083 & 0.073 & 0.067 & 2399 & 0.072 & 0.066 \\
1747 & 0.075 & 0.069 & 2087 & 0.073 & 0.067 & 2411 & 0.072 & 0.066 \\
1753 & 0.075 & 0.069 & 2089 & 0.073 & 0.067 & 2417 & 0.072 & 0.066 \\
1759 & 0.075 & 0.069 & 2099 & 0.073 & 0.067 & 2423 & 0.072 & 0.066 \\
1777 & 0.075 & 0.069 & 2111 & 0.073 & 0.067 & 2437 & 0.072 & 0.066 \\
1783 & 0.075 & 0.069 & 2113 & 0.073 & 0.067 & 2441 & 0.072 & 0.066 \\
1787 & 0.075 & 0.069 & 2129 & 0.073 & 0.067 & 2447 & 0.072 & 0.066 \\
1789 & 0.075 & 0.069 & 2131 & 0.073 & 0.067 & 2459 & 0.072 & 0.066 \\
1801 & 0.075 & 0.069 & 2137 & 0.073 & 0.067 & 2467 & 0.072 & 0.066 \\
1811 & 0.075 & 0.069 & 2141 & 0.073 & 0.067 & 2473 & 0.072 & 0.066 \\
1823 & 0.075 & 0.069 & 2143 & 0.073 & 0.067 & 2477 & 0.072 & 0.066 \\
1831 & 0.075 & 0.069 & 2153 & 0.073 & 0.067 & 2503 & 0.072 & 0.066 \\
1847 & 0.074 & 0.069 & 2161 & 0.073 & 0.067 & 2521 & 0.072 & 0.066 \\
1861 & 0.074 & 0.069 & 2179 & 0.073 & 0.067 & 2531 & 0.072 & 0.066 \\
1867 & 0.074 & 0.068 & 2203 & 0.073 & 0.067 & 2539 & 0.072 & 0.066 \\
1871 & 0.074 & 0.068 & 2207 & 0.073 & 0.067 & 2543 & 0.071 & 0.066 \\
1873 & 0.074 & 0.068 & 2213 & 0.073 & 0.067 & 2549 & 0.071 & 0.066 \\
1877 & 0.074 & 0.068 & 2221 & 0.073 & 0.067 & 2551 & 0.071 & 0.066 \\
1879 & 0.074 & 0.068 & 2237 & 0.073 & 0.067 & 2557 & 0.071 & 0.066 \\
1889 & 0.074 & 0.068 & 2239 & 0.073 & 0.067 & 2579 & 0.071 & 0.065 \\
1901 & 0.074 & 0.068 & 2243 & 0.073 & 0.067 & 2591 & 0.071 & 0.065 \\
1907 & 0.074 & 0.068 & 2251 & 0.073 & 0.067 & 2593 & 0.071 & 0.065 \\
1913 & 0.074 & 0.068 & 2267 & 0.073 & 0.067 & 2609 & 0.071 & 0.065 \\
1931 & 0.074 & 0.068 & 2269 & 0.073 & 0.067 & 2617 & 0.071 & 0.065 \\
1933 & 0.074 & 0.068 & 2273 & 0.073 & 0.067 & 2621 & 0.071 & 0.065 \\
1949 & 0.074 & 0.068 & 2281 & 0.072 & 0.067 & 2633 & 0.071 & 0.065 \\
1951 & 0.074 & 0.068 & 2287 & 0.072 & 0.067 & 2647 & 0.071 & 0.065 \\
1973 & 0.074 & 0.068 & 2293 & 0.072 & 0.067 & 2657 & 0.071 & 0.065 \\
1979 & 0.074 & 0.068 & 2297 & 0.072 & 0.067 & 2659 & 0.071 & 0.065 \\
1987 & 0.074 & 0.068 & 2309 & 0.072 & 0.066 & 2663 & 0.071 & 0.065 \\
1993 & 0.074 & 0.068 & 2311 & 0.072 & 0.066 & 2671 & 0.071 & 0.065 \\
1997 & 0.074 & 0.068 & 2333 & 0.072 & 0.066 & 2677 & 0.071 & 0.065 \\
1999 & 0.074 & 0.068 & 2339 & 0.072 & 0.066 & 2683 & 0.071 & 0.065 \\
2003 & 0.074 & 0.068 & 2341 & 0.072 & 0.066 & 2687 & 0.071 & 0.065 \\
2011 & 0.074 & 0.068 & 2347 & 0.072 & 0.066 & 2689 & 0.071 & 0.065 \\
2017 & 0.074 & 0.068 & 2351 & 0.072 & 0.066 & 2693 & 0.071 & 0.065 \\
2027 & 0.074 & 0.068 & 2357 & 0.072 & 0.066 & 2699 & 0.071 & 0.065 \\
2029 & 0.074 & 0.068 & 2371 & 0.072 & 0.066 & 2707 & 0.071 & 0.065 \\
\end{tabular}}
\caption{Data for the proof of Corollary \ref{cor:Dusartcor} for $1699\leq p\leq 2707$.}\label{tab:dataDusartCor3}
\end{table}

\begin{table}[H]
\centerline{\begin{tabular}{lll|lll|lll}
\multicolumn{1}{c}{$p$} & \multicolumn{1}{c}{$f(p)$} & \multicolumn{1}{c|}{$g(p)$} & \multicolumn{1}{c}{$p$} & \multicolumn{1}{c}{$f(p)$} & \multicolumn{1}{c|}{$g(p)$} & \multicolumn{1}{c}{$p$} & \multicolumn{1}{c}{$f(p)$} & \multicolumn{1}{c}{$g(p)$} \\
\hline
2711 & 0.071 & 0.065 & 2797 & 0.071 & 0.065 & 2897 & 0.070 & 0.064 \\
2713 & 0.071 & 0.065 & 2801 & 0.071 & 0.065 & 2903 & 0.070 & 0.064 \\
2719 & 0.071 & 0.065 & 2803 & 0.070 & 0.065 & 2909 & 0.070 & 0.064 \\
2729 & 0.071 & 0.065 & 2819 & 0.070 & 0.065 & 2917 & 0.070 & 0.064 \\
2731 & 0.071 & 0.065 & 2833 & 0.070 & 0.065 & 2927 & 0.070 & 0.064 \\
2741 & 0.071 & 0.065 & 2837 & 0.070 & 0.065 & 2939 & 0.070 & 0.064 \\
2749 & 0.071 & 0.065 & 2843 & 0.070 & 0.065 & 2953 & 0.070 & 0.064 \\
2753 & 0.071 & 0.065 & 2851 & 0.070 & 0.065 & 2957 & 0.070 & 0.064 \\
2767 & 0.071 & 0.065 & 2857 & 0.070 & 0.065 & 2963 & 0.070 & 0.064 \\
2777 & 0.071 & 0.065 & 2861 & 0.070 & 0.065 & 2969 & 0.070 & 0.064 \\
2789 & 0.071 & 0.065 & 2879 & 0.070 & 0.064 & 2971 & 0.070 & 0.064 \\
2791 & 0.071 & 0.065 & 2887 & 0.070 & 0.064 & & & \\
\end{tabular}}
\caption{Data for the proof of Corollary \ref{cor:Dusartcor} for $2711\leq p\leq 2971$.}\label{tab:dataDusartCor4}
\end{table}

\begin{table}[H]
\centerline{\begin{tabular}{lll|lll|lll|lll}
\multicolumn{1}{c}{$n$} & \multicolumn{1}{c}{$\Phi$} & \multicolumn{1}{c|}{$b$} & \multicolumn{1}{c}{$n$} & \multicolumn{1}{c}{$\Phi$} & \multicolumn{1}{c|}{$b$} & \multicolumn{1}{c}{$n$} & \multicolumn{1}{c}{$\Phi$} & \multicolumn{1}{c|}{$b$} & \multicolumn{1}{c}{$n$} & \multicolumn{1}{c}{$\Phi$} & \multicolumn{1}{c}{$b$} \\
\hline
70 & 16 & 13 & 83 & 20 & 16 & 96 & 21 & 18 & 109 & 26 & 20 \\
71 & 17 & 13 & 84 & 20 & 16 & 97 & 22 & 18 & 110 & 26 & 21 \\
72 & 17 & 14 & 85 & 20 & 16 & 98 & 22 & 18 & 111 & 26 & 21 \\
73 & 18 & 14 & 86 & 20 & 16 & 99 & 22 & 19 & 112 & 26 & 21 \\
74 & 18 & 14 & 87 & 20 & 16 & 100 & 22 & 19 & 113 & 27 & 21 \\
75 & 18 & 14 & 88 & 20 & 17 & 101 & 23 & 19 & 114 & 27 & 21 \\
76 & 18 & 14 & 89 & 21 & 17 & 102 & 23 & 19 & 115 & 27 & 21 \\
77 & 18 & 15 & 90 & 21 & 17 & 103 & 24 & 19 & 116 & 27 & 22 \\
78 & 18 & 15 & 91 & 21 & 17 & 104 & 24 & 19 & 117 & 27 & 22 \\
79 & 19 & 15 & 92 & 21 & 17 & 105 & 24 & 20 & 118 & 27 & 22 \\
80 & 19 & 15 & 93 & 21 & 17 & 106 & 24 & 20 & 119 & 27 & 22 \\
81 & 19 & 15 & 94 & 21 & 18 & 107 & 25 & 20 & 120 & 27 & 22 \\
82 & 19 & 15 & 95 & 21 & 18 & 108 & 25 & 20 & 121 & 28 & 23 \\
\end{tabular}}
\caption{Data for the proof of Lemma \ref{lem:phiboundcase4} for $p=11$.}\label{tab:dataphi4a}
\end{table}

\begin{table}[H]
\centerline{\begin{tabular}{lll|lll|lll|lll}
\multicolumn{1}{c}{$n$} & \multicolumn{1}{c}{$\Phi$} & \multicolumn{1}{c|}{$b$} & \multicolumn{1}{c}{$n$} & \multicolumn{1}{c}{$\Phi$} & \multicolumn{1}{c|}{$b$} & \multicolumn{1}{c}{$n$} & \multicolumn{1}{c}{$\Phi$} & \multicolumn{1}{c|}{$b$} & \multicolumn{1}{c}{$n$} & \multicolumn{1}{c}{$\Phi$} & \multicolumn{1}{c}{$b$} \\
\hline
190 & 39 & 30 & 199 & 43 & 31 & 208 & 43 & 33 & 217 & 44 & 34 \\
191 & 40 & 30 & 200 & 43 & 31 & 209 & 43 & 33 & 218 & 44 & 34 \\
192 & 40 & 30 & 201 & 43 & 31 & 210 & 43 & 33 & 219 & 44 & 34 \\
193 & 41 & 30 & 202 & 43 & 32 & 211 & 44 & 33 & 220 & 44 & 34 \\
194 & 41 & 30 & 203 & 43 & 32 & 212 & 44 & 33 & 221 & 45 & 35 \\
195 & 41 & 31 & 204 & 43 & 32 & 213 & 44 & 33 & 222 & 45 & 35 \\
196 & 41 & 31 & 205 & 43 & 32 & 214 & 44 & 33 & 223 & 46 & 35 \\
197 & 42 & 31 & 206 & 43 & 32 & 215 & 44 & 34 & & & \\
198 & 42 & 31 & 207 & 43 & 32 & 216 & 44 & 34 & & & \\
\end{tabular}}
\caption{Data for the proof of Lemma \ref{lem:phiboundcase4} for $p=13$.}\label{tab:dataphi4b}
\end{table}



\begin{thebibliography}{99}
\bibitem{bib:Buchstab} A.A. Buchstab, Asymptotic estimation of a general number-theoretic function, Mat. Sb. 44 (1937), 1239-1246 (in Russian).

\bibitem{bib:DeBruijn} N.G. de Bruinj, On the number of uncancelled elements in the sieve of Eratosthenes, Proc. Nederl. Akad. Wetensch. {53} (1950), 803-812.

\bibitem{bib:Dusart} P. Dusart, Estimates of some functions over primes without R.H., arXiv:1002.0442v1, 2 Feb 2010.

\bibitem{bib:Finch} S.R. Finch, Mathematical Constants, Cambridge University Press, 2003.

\bibitem{bib:Nagura} J. Nagura, On the interval containing at least one prime number, Proc. Japan Acad. {28} (1952), 177-181.

\bibitem{bib:Rosser} J.B. Rosser and L. Schoenfeld, Approximate formulas for some functions of prime numbers, Illinois J. Math {6} (1962), 64-94.

\bibitem{bib:mainpaper} J.Z. Schroeder, Every cubic bipartite graph has a prime labeling except $K_{3,3}$, submitted.
\end{thebibliography}
\end{document}